\newcommand{\lel}{\left\langle}
\newcommand{\rir}{\right\rangle}
\newcommand{\diag}{\text{diag}}
\newcommand{\keywords}{{\it ~ Keywords:~}}
\newtheorem{theorem}{Theorem}[section]
\newtheorem{definition}[theorem]{Definition}
\newtheorem{lemma}[theorem]{Lemma}
\newtheorem{ass}[theorem]{Assumption}
\newtheorem{proposition}[theorem]{Proposition}
\title{Some Properties of Reflected Backward Stochastic Differential Equations for a Finite State Markov Chain Model}
\author{Zhe Yang \thanks{Department of Mathematics and Statistics, University of Calgary, 2500 University Drive NW, Calgary, AB, T2N 1N4, Canada.} \and Dimbinirina Ramarimbahoaka\footnotemark[1] \and Robert J. Elliott \thanks{Haskayne School of Business, University of Calgary, 2500 University Drive NW, Calgary, AB, T2N 1N4, Canada.}  \thanks{School of Mathematical Sciences, University of Adelaide, SA 5005, Australia.}}
\date{}
\begin{document}
\maketitle
\begin{abstract}
In this paper, we
provide an estimate for the solutions of reflected backward stochastic differential equations (RBSDEs) driven by a Markov chain, derive a continuous dependence property for their solutions with
respect to the parameters of the equations, and show similar properties for solutions of backward stochastic differential equations (BSDEs). We finally establish a comparison result
for the solutions of RBSDEs driven by a Markov chain.
\end{abstract}

\keywords{ Markov chains; RBSDEs for the Markov Chain; comparison theorem.}

\section{Introduction}	
 \indent In 2012, van der Hoek and Elliott \cite{RE1} introduced a
market model where uncertainties are modeled by a finite state
Markov chain, instead of Brownian motion or related jump diffusions,
which are often used when pricing financial derivatives. The
Markov chain has a semimartingale representation involving a vector martingale $M=\{M_t\in\mathbb{R}^N,~t\geq 0\}$. BSDEs in this
framework were introduced by Cohen and Elliott \cite{Sam1} as
$$ Y_t = \xi + \int_t^T F(s, Y_s, Z_s) ds -\int_t^T  Z'_sdM_s
,~~~~~t\in[0,T],
$$
where $F$ is the driver, $\xi$ is the terminal condition and $M$ is a vector martingale given by the dynamics of the Markov chain.\\
\indent Cohen and Elliott \cite{Sam2} and \cite{Sam3} gave some comparison results for
multidimensional BSDEs in the Markov Chain model
under conditions involving not only the two drivers but also
the two solutions. Cohen and Elliott \cite{Sam3} also
showed the existence of solutions of the above equations with stopping times and introduced a type of nonlinear expectations called $F$-expectations, corresponding to the solution of these equations and  based on the comparison results. Yang, Ramarimbahoaka and Elliott \cite{zhedim} extended the comparison result for two one-dimensional BSDEs driven
by a Markov chain to a situation involving
conditions only on the two drivers and provided a converse comparison result in terms of $F$-expectations defined in  \cite{zhedim}. \\
\indent An, Cohen and Ji \cite{An} discuss American options using the theory of reflected backward stochastic differential equations (RBSDEs) with Markov chain noise in discrete time. Based on the comparison theorem in \cite{zhedim} and using the penalization method Ramarimbahoaka, Yang and Elliott \cite{RYE} establish the existence and uniqueness of the solution $(V,Z,K)$ of the following RBSDE:
\begin{enumerate}[label=\roman{*}), ref=(\roman{*})]
\item  $V_t = \xi + \int_t^T f(s, V_s, Z_s) ds + K_T -K_t -\int_t^T  Z'_s dM_s$, $~~~0 \leq t \leq T$;
\item  $V_t \geq G_t $,  $~~~0 \leq t \leq T$;
\item  $\{K_t, t \in [0,T]\}$ is continuous and increasing, moreover, $K_0=0$ and \\
$\int_0^T (V_s - G_s) dK_s= 0.$
\end{enumerate}
This is proven under some conditions on the terminal condition $\xi$, the driver $f$ and $G$, which is called an obstacle, and is a process to force the solution $V$ to stay above $G$. \\
\indent In this paper, we derive some properties of RBSDEs for the Markov chain noise case. We
provide an estimate for the solutions of this type of equation which establishes their boundness in some sence. We then discuss the difference between two solutions depending on the parameters of two equations, and deduce similar properties for solutions of BSDEs with Markov chain noise. We finally show comparison results
for the solutions of RBSDEs driven by a Markov chain martingale.\\
\indent The sections of the paper are as follows: In Section 2, we present the Markov chain model and some preliminary results. Section 3 establishes an estimate of the solutions of RBSDEs for the Markov Chain, and Section 4 discusses the continuous dependence
property of solutions of RBSDEs for the Markov chain. In the final section, we deduce a comparison result for one-dimensional RBSDEs driven by the Markov chain.
\section{The  Model and Some Preliminary Results}\label{prelim}
\subsection{The Markov Chain }
\indent Consider a finite state Markov chain. Following
\cite{RE1} and \cite{RE2} of van der Hoek and Elliott, we assume the
finite state Markov chain $X=\{X_t, t\geq 0 \}$ is defined on the
probability space $(\Omega,\mathcal{F},P)$ and the state space of
$X$ is identified with the set of unit column vectors $\{e_1,e_2\cdots,e_N\}$ in
$\mathbb{R}^N$, where $e_i=(0,\cdots,1\cdots,0) ' $ with 1 in the
$i$-th position.Take $\mathcal{F}_t=\sigma\{X_s ; 0\leq s \leq t\}$ to be
the $\sigma$-algebra generated by the Markov process $X=\{X_t\}$
and $\{\mathcal{F}_t\}$ to be its completed natural filtration. Since $X$ is a right continuous with left
limits (written RCLL) jump-process, then the filtration $\{\mathcal{F}_t\}$ is also
right-continuous. The  Markov chain has the semimartingale
representation:
\begin{equation}\label{semimartingale}
X_t=X_0+\int_{0}^{t}A_sX_sds+M_t.
\end{equation}
Here, $A=\{A_t, t\geq 0 \}$ is the rate matrix of the chain $X$ and
$M$ is a vector martingale (See Elliott, Aggoun and Moore
\cite{RE4}).
We assume the elements $A_{ij}(t)$ of $A=\{A_t, t\geq 0 \}$ are bounded. Then the martingale $M$ is square integrable.\\
\indent
For our Markov chain $X_t \in \{e_1,\cdots,e_N\}$, note that $X_t X'_t $ is the matrix $\diag(X_t)$. Also, from \eqref{semimartingale}
$dX_t = A_t X_t dt + dM_t$. Then, by the product rule for semimartingales, we obtain for any $t\in[0,T],$
\begin{align}\label{1}
\nonumber X_tX'_t &= X_0X'_0 + \int_0^t X_{s-} dX'_s + \int_0^t (dX_{s}) X'_{s-} + \sum_{0 < s \leq t} \Delta X_s \Delta X'_s \\
\nonumber  &= \diag(X_0) + \int_0^t X_s (A_sX_s)' ds + \int_0^t X_{s-} dM'_s \\
\nonumber& \quad + \int_0^t A_s X_s X'_{s-} ds + \int_0^t (dM_s) X'_{s-} + [X,X]_t\\
\nonumber
& = \diag (X_0) + \int_0^t X_s X'_s A'_sds + \int_0^t X_{s-} dM'_s \\
& \quad + \int_0^t A_sX_s X'_{s-} ds + \int_0^t (dM_s) X'_{s-} + [X,X]_t - \lel X,X\rir_t + \lel X,X \rir_t.
\end{align}
Recall, $\lel X, X\rir$ is the unique predictable process such that $[X,X]-\lel X,X \rir$ is a martingale and write $L$ for the matrix martingale process
$$
L_t = [X,X]_t - \lel X,X\rir_t, \quad t \in [0,T].
$$
 However, we also have:
\begin{equation}\label{2}
X_tX'_t = \diag (X_t) = \diag(X_0) + \int_0^t \diag (A_s X_s)  ds + \int_0^t \diag(M_s).
\end{equation}
Equating the predictable terms in \eqref{1} and \eqref{2}, we have
\begin{equation}\label{3}
\lel X, X\rir_t =  \int_0^t \diag(A_sX_s) ds - \int_0^t \diag(X_s) A'_s ds - \int_0^t A_s \diag(X_s) ds.
\end{equation}
Let $\Psi$ be the matrix
\begin{equation}\label{Psi}\Psi_t = \diag(A_tX_{t-})- \diag(X_{t-})A'_t - A_t \diag(X_{t-}).
\end{equation}
Then $d\langle X,X\rangle_t=\Psi_tdt.$ For any $t>0$, Cohen and Elliott \cite{Sam1,Sam3}, define the semi-norm $\|.\|_{X_t}$, for
$C, D \in \mathbb{R}^{N\times K}$ as:
\begin{align*}
\lel C, D\rir_{X_t} & = Tr(C' \Psi_tD), \\[2mm]
\|C\|^2_{X_t} & = \lel C, C\rir_{X_t}.
\end{align*}
We only consider the case where $C \in \mathbb{R}^N$, hence we
introduce the semi-norm $\|.\|_{X_t}$ as:
\begin{align*}
\lel C, D\rir_{X_t}& = C' \Psi_t D, \\[2mm]
\|C\|^2_{X_t} &= \lel C, C\rir_{X_t}.
\end{align*}
It follows from equation \eqref{3} that
\[\int_t^T \|C\|^2_{X_s} ds = \int_t^T  C' d\lel X, X\rir_s C.\]
\indent Lemma \ref{Z2} is Lemma 3.1 in Cohen and Elliott \cite{Sam3}.
\begin{lemma}\label{Z2}
For $Z$, a predictable process in $\mathbb{R}^N$, verifying:
 \[E \left[ \int_0^t \|Z_u\|^2_{X_u} du\right] < \infty,\]
we have:
\begin{equation*}
 E \left[\left(\int_0^t  Z'_{u} dM_u  \right)^2\right] = E \left[ \int_0^t \|Z_u\|^2_{X_u} du\right].
\end{equation*}
\end{lemma}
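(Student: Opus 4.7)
The plan is to derive the identity as a direct consequence of the standard Itô isometry for stochastic integrals against a square integrable vector martingale, combined with the explicit description of the predictable quadratic variation of $M$ supplied by \eqref{3} and \eqref{Psi}. First I would set
$$N_t := \int_0^t Z'_u dM_u$$
and observe that, under the hypothesis $E\bigl[\int_0^t \|Z_u\|^2_{X_u} du\bigr] < \infty$, $N$ is a square integrable real martingale starting from zero. The classical Itô isometry then yields $E[N_t^2] = E[\langle N, N\rangle_t]$, where $\langle N, N\rangle$ denotes the predictable quadratic variation.

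The second step is to re-express $\langle N, N\rangle_t$ in the language of the semi-norm $\|\cdot\|_{X_t}$. From the general bracket formula for a stochastic integral against a vector martingale,
$$\langle N, N\rangle_t = \int_0^t Z'_u \, d\langle M, M\rangle_u \, Z_u.$$
The semimartingale representation \eqref{semimartingale} shows that $X$ and $M$ differ only by the finite-variation term $\int_0^\cdot A_s X_s ds$, so $\langle M, M\rangle = \langle X, X\rangle$. Combining this with $d\langle X, X\rangle_u = \Psi_u du$ from \eqref{3}--\eqref{Psi} and the definition $Z'_u \Psi_u Z_u = \|Z_u\|^2_{X_u}$, I would conclude
$$E[N_t^2] = E\!\left[\int_0^t Z'_u \Psi_u Z_u \, du\right] = E\!\left[\int_0^t \|Z_u\|^2_{X_u} du\right],$$
which is the asserted identity.

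The only delicate point I anticipate is justifying the bracket formula $\langle N, N\rangle_t = \int_0^t Z' \, d\langle M, M\rangle \, Z$ for every predictable $Z$ in the stated integrability class. This is the standard $L^2$-density step in the construction of the vector stochastic integral: verify the identity for simple predictable integrands by a direct computation using the martingale property, then extend by the isometry itself to the $L^2$-closure. No structure specific to the Markov chain enters beyond the explicit form of $\Psi$ already derived in the excerpt, so no new obstacle arises at this stage.
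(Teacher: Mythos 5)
Your argument is correct and is the standard It\^o-isometry proof; the paper itself does not prove this lemma but simply quotes it as Lemma 3.1 of Cohen and Elliott \cite{Sam3}, whose argument proceeds exactly along the lines you describe (isometry, the bracket formula $\langle N,N\rangle_t=\int_0^t Z_u'\,d\langle M,M\rangle_u\,Z_u$, and the identification $d\langle X,X\rangle_u=\Psi_u\,du$). The one step worth stating explicitly is why $\langle M,M\rangle=\langle X,X\rangle$, namely that the drift $\int_0^{\cdot}A_sX_s\,ds$ in \eqref{semimartingale} is continuous and of finite variation, so it contributes nothing to the quadratic variation --- which is precisely the observation you invoke.
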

\begin{definition}[Moore-Penrose pseudoinverse]\label{defMoore}
The Moore-Penrose pseudoinverse of a square matrix $Q$ is the matrix $Q^{\dagger}$ satisfying the properties:\\[2mm]
  1) $QQ^{\dagger}Q = Q$ \\[2mm]
  2) $Q^{\dagger}QQ^{\dagger} = Q^{\dagger}$ \\[2mm]
  3) $(QQ^{\dagger})' = QQ^{\dagger}$ \\[2mm]
  4) $(Q^{\dagger}Q)'=Q^{\dagger}Q.$
\end{definition}
Denote by $\mathcal{P}$, the $\sigma$-field generated by the predictable processes defined on $(\Omega, P, \mathcal{F})$ and with respect to the filtration $\{\mathcal{F}_t\}_{t \in [0,\infty)}$. For $t\in[0,\infty)$, consider the following spaces:\\[2mm]
$ L^2(\mathcal{F}_t): =\{\xi;~\xi$ is a $ \mathbb{R} \text{-valued}~ \mathcal{F}_t $-measurable random variable such that $ E[|\xi|^2]< \infty\};$\\[2mm]
$L^2_{\mathcal{F}}(0,t;\mathbb{R}): =\{\phi:[0,t]\times\Omega\rightarrow\mathbb{R};~ \phi$ is an adapted and RCLL process with  $E[\int^t_0|\phi(s)|^2ds]<+\infty\}$;\\[2mm]
$P^2_{\mathcal{F}}(0,t;\mathbb{R}^N): =\{\phi:[0,t]\times\Omega\rightarrow\mathbb{R}^N;~ \phi $ is a predictable process with  $E[\int^t_0\|\phi(s)\|_{X_s}^2ds]<+\infty\}.$\\[2mm]
\subsection{BSDEs for the Markov Chain Model.}\label{bsdeMC}
\indent Consider a one-dimensional BSDE with the Markov chain noise
as follows:
\begin{equation}\label{BSDEMC}
Y_t = \xi + \int_t^T f(u, Y_u, Z_u ) du -\int_t^T  Z'_{u} dM_u
,~~~~~t\in[0,T].
\end{equation}
Here the terminal condition $\xi$ and the coefficient $f$ are known. \\
\indent Lemma \ref{existence} (Theorem 6.2 in Cohen and Elliott \cite{Sam1}) gives the existence and uniqueness result of solutions to the BSDEs
driven by Markov chains.
\begin{lemma}\label{existence}
Assume $\xi\in L^2(\mathcal{F}_T)$ and the predictable
function $f: \Omega \times [0, T] \times \mathbb{R} \times
\mathbb{R}^N \rightarrow \mathbb{R}$ satisfies a Lipschitz
condition, in the sense that there exists two constants $l_1, l_2>0$  such
that for each $y_1,y_2 \in \mathbb{R}$ and $z_1,z_2 \in
\mathbb{R}^{N}$, $t\in[0,T]$,
\begin{equation}\label{Lipchl}
|f(t,y_1,z_1) - f(t, y_2, z_2)| \leq l_1 |y_1-y_2| + l_2 \|z_1
-z_2\|_{X_t}.
\end{equation}
We also assume $f$ satisfies
$
 E [ \int_0^T |f^2(t,0,0)| dt] <\infty.$\\
 Then there exists a solution $(Y, Z)\in L^2_{\mathcal{F}}(0,T;\mathbb{R})\times P^2_{\mathcal{F}}(0,T;\mathbb{R}^N)$
to BSDE (\ref{BSDEMC}). Moreover, this solution is
unique up to indistinguishability for $Y$ and equality $d\langle
X,X\rangle_t$ $\times\mathbb{P}$-a.s. for $Z$.
\end{lemma}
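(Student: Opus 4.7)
The plan is to prove existence and uniqueness by a Banach fixed-point argument on the Hilbert space $\mathcal{H} := L^2_{\mathcal{F}}(0,T;\mathbb{R}) \times P^2_{\mathcal{F}}(0,T;\mathbb{R}^N)$, equipped with an exponentially weighted norm
\[
\|(y,z)\|_{\beta}^2 := E\!\left[\int_0^T e^{\beta s}\bigl(|y_s|^2 + \|z_s\|_{X_s}^2\bigr)\,ds\right],
\]
for a constant $\beta>0$ to be chosen later, which is equivalent to the usual norm. The contraction will come from absorbing the Lipschitz constants $l_1,l_2$ against the $\beta$-weight in the manner of Pardoux and Peng.

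Given $(y,z)\in\mathcal{H}$, I would first verify that the random variable $\eta := \xi + \int_0^T f(u,y_u,z_u)\,du$ lies in $L^2(\mathcal{F}_T)$, using the Lipschitz assumption \eqref{Lipchl} together with $E[\int_0^T|f(u,0,0)|^2 du]<\infty$ and the estimates on $y,z$. I would then define $N_t := E[\eta\mid\mathcal{F}_t]$, a square-integrable $\{\mathcal{F}_t\}$-martingale, and invoke the martingale representation theorem for the Markov chain filtration (as used in \cite{Sam1}) to write $N_t = N_0 + \int_0^t Z'_s\,dM_s$ with $Z \in P^2_{\mathcal{F}}(0,T;\mathbb{R}^N)$. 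Setting $Y_t := N_t - \int_0^t f(u,y_u,z_u)\,du$ yields a pair $(Y,Z)$ with $Y_T=\xi$ that satisfies \eqref{BSDEMC}; this defines a map $\Phi:(y,z)\mapsto (Y,Z)$ on $\mathcal{H}$.

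The heart of the proof, and the step that I expect to be the main obstacle, is showing that $\Phi$ is a strict contraction for $\beta$ large enough. For two inputs $(y^i,z^i)$ with images $(Y^i,Z^i)$, I would apply the Itô product formula to $e^{\beta t}|\bar Y_t|^2$, where $\bar Y := Y^1-Y^2$ and $\bar z := z^1-z^2$, $\bar y := y^1-y^2$, $\bar Z := Z^1-Z^2$. Taking expectations, the martingale increments drop out, the quadratic variation term produces $E[\int_0^T e^{\beta s}\|\bar Z_s\|_{X_s}^2\,ds]$ thanks to Lemma \ref{Z2} and the identity $\int_0^T C' d\langle X,X\rangle_s C = \int_0^T\|C\|_{X_s}^2 ds$, and the cross term with the driver increment is controlled via \eqref{Lipchl} and Young's inequality $2ab \leq \varepsilon a^2 + \varepsilon^{-1} b^2$. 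Choosing $\varepsilon$ and $\beta$ appropriately in terms of $l_1,l_2$ yields an inequality of the form
\[
\|\Phi(y^1,z^1) - \Phi(y^2,z^2)\|_\beta^2 \leq \kappa\,\|(y^1,z^1) - (y^2,z^2)\|_\beta^2
\]
with $\kappa<1$. The Banach fixed-point theorem then supplies a unique fixed point $(Y,Z)\in\mathcal{H}$, which is the unique solution of \eqref{BSDEMC}; uniqueness for $Y$ up to indistinguishability follows from the RCLL property, while uniqueness of $Z$ is naturally in the $d\langle X,X\rangle\times\mathbb{P}$-a.s.\ sense dictated by the $\|\cdot\|_{X_t}$ semi-norm.
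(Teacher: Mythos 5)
The paper does not prove this lemma itself---it is imported verbatim as Theorem 6.2 of Cohen and Elliott \cite{Sam1}---and your proposal follows essentially the same route as that original proof: the martingale representation theorem for the chain's filtration to define the map $\Phi$, the isometry of Lemma \ref{Z2} to convert the jump/quadratic-variation term into $E[\int_0^T e^{\beta s}\|\bar Z_s\|^2_{X_s}\,ds]$, and a Banach fixed-point argument in an exponentially weighted norm with $\beta$ chosen large relative to $l_1,l_2$. The one point worth making explicit is that the argument only closes because the Lipschitz condition \eqref{Lipchl}, the It\^{o} isometry, and the norm on the $Z$-component all use the same degenerate seminorm $\|\cdot\|_{X_t}$, which is precisely why uniqueness of $Z$ can only be asserted $d\langle X,X\rangle_t\times\mathbb{P}$-a.s.; you have identified this correctly.
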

\begin{ass}\label{ass0}
Assume the Lipschitz constant $l_2$ of the driver $f$ given in \eqref{Lipchl} satisfies  $$~~~~~~l_2\|\Psi_t^{\dagger}\|_{N \times N} \sqrt{6m}< 1, ~~~\text{ for any }~t \in [0,T],$$where $\Psi$ is given in \eqref{Psi} and $m>0$ is the bound of $\|A_t\|_{N\times N}$, for any $t\in[0,T]$.
\end{ass}
\indent The following lemma, which is a comparison result for BSDEs driven by a Markov chain, is found in Yang, Ramarimbahoaka and Elliott \cite{zhedim}.\\
\begin{lemma} \label{CTBSDE} For $i=1,2,$ suppose $(Y^{(i)},Z^{(i)})$ is the solution of
BSDE:
$$Y^{(i)}_t = \xi_i + \int_t^T f_i(s, Y^{(i)}_s, Z^{(i)}_s ) ds
- \int_t^T (Z_{s}^{(i)})' dM_s,\hskip.4cmt\in[0,T].$$
Assume $\xi_1,\xi_2\in L^2(\mathcal{F}_T)$, and $f_1,f_2:\Omega \times [0,T]\times \mathbb{R}\times \mathbb{R}^N \rightarrow \mathbb{R}$ satisfy some conditions such that the above two BSDEs have unique solutions. Moreover assume $f_1$ satisfies \eqref{Lipchl} and Assumption \ref{ass0}.
If $\xi_1 \leq \xi_2 $, a.s. and $f_1(t,Y_t^{(2)}, Z_t^{(2)}) \leq f_2(t,Y_t^{(2)}, Z_t^{(2)})$, a.e., a.s., then
$$P( Y_t^{(1)}\leq Y_t^{(2)},~~\text{ for any } t \in [0,T])=1.$$
\end{lemma}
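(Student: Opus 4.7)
The plan is to follow the classical linearization-plus-change-of-measure strategy, adapted to Markov chain noise. First I would set $\hat Y = Y^{(1)} - Y^{(2)}$, $\hat Z = Z^{(1)} - Z^{(2)}$, $\hat\xi=\xi_1-\xi_2\le 0$, and subtract the two BSDEs to obtain
\[
\hat Y_t=\hat\xi+\int_t^T\bigl[f_1(s,Y^{(1)}_s,Z^{(1)}_s)-f_2(s,Y^{(2)}_s,Z^{(2)}_s)\bigr]ds-\int_t^T\hat Z'_s\,dM_s.
\]
Adding and subtracting $f_1(s,Y^{(2)}_s,Z^{(2)}_s)$ splits the integrand into the quantity $\phi_s:=f_1(s,Y^{(2)}_s,Z^{(2)}_s)-f_2(s,Y^{(2)}_s,Z^{(2)}_s)\le 0$ (by the driver hypothesis) and a Lipschitz remainder to be linearized.

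The next step is to represent
\[
f_1(s,Y^{(1)}_s,Z^{(1)}_s)-f_1(s,Y^{(2)}_s,Z^{(2)}_s)=a_s\hat Y_s+b_s'\Psi_s\hat Z_s,
\]
for predictable processes $a,b$. I would take $a_s$ as the standard difference quotient in $y$, bounded by $l_1$. The $z$--coefficient $b_s$ is more delicate because the Lipschitz control on $f_1$ in $z$ is expressed through the seminorm $\|\cdot\|_{X_s}$, so I would use the Moore--Penrose pseudoinverse and set
\[
b_s=\frac{f_1(s,Y^{(2)}_s,Z^{(1)}_s)-f_1(s,Y^{(2)}_s,Z^{(2)}_s)}{\|\hat Z_s\|_{X_s}^{2}}\,\Psi_s^{\dagger}\hat Z_s
\]
on $\{\hat Z_s\neq 0\}$ and $b_s=0$ otherwise, so that $b_s'\Psi_s\hat Z_s$ reproduces the $z$--Lipschitz increment while $|b_s'\Psi_s\hat Z_s|\le l_2\|\hat Z_s\|_{X_s}$ and the Euclidean norm of $b_s$ is controlled by $l_2\|\Psi_s^{\dagger}\|_{N\times N}$.

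With this linear form at hand, I would introduce the integrating factor $\Lambda_s:=\Gamma_s\exp(-\int_0^s a_u\,du)$, where $\Gamma$ is the Dol\'eans--Dade exponential solving $d\Gamma_s=\Gamma_{s-}b_s'\,dM_s$, $\Gamma_0=1$. Applying the product rule to $\Lambda_s\hat Y_s$ and using $d\langle X,X\rangle_s=\Psi_s\,ds$, the drifts $a_s\hat Y_s\,ds$ and $b_s'\Psi_s\hat Z_s\,ds$ cancel exactly against the change-of-measure terms, leaving a local martingale plus $\Lambda_s\phi_s\,ds$. After upgrading to a true martingale, conditional expectation yields
\[
\Lambda_t\hat Y_t=E\Bigl[\Lambda_T\hat\xi-\int_t^T\Lambda_s\phi_s\,ds\,\Big|\,\mathcal F_t\Bigr]\le 0,
\]
and since $\Lambda_t>0$, I conclude $\hat Y_t\le 0$ a.s.\ for each $t$, with the uniform version following from RCLL paths.

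The main obstacle, and the reason Assumption \ref{ass0} is imposed on $f_1$, is ensuring that $\Gamma$ is a strictly positive and uniformly integrable martingale. Positivity at a jump requires $1+b_s'\Delta M_s>0$, which needs a quantitative control of $\|b_s\|$ against the jump sizes of $M$; since $\|\Delta M_s\|$ and its conditional second moments are bounded through the rate matrix (the factor $\sqrt{6m}$ reflects this standard estimate using the bound $m$ on $\|A_t\|$), the condition $l_2\|\Psi^{\dagger}_t\|_{N\times N}\sqrt{6m}<1$ is precisely what is needed to simultaneously guarantee $\Gamma>0$ and enough integrability to interchange conditional expectation with stochastic and Lebesgue integrals. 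The remaining points---predictable measurability of $a_s,b_s$ (ensured by the explicit formulas) and the localization argument upgrading local to true martingales using the $L^2$ bounds from Lemma \ref{existence}---are routine once the key estimate on $\Gamma$ is secured.
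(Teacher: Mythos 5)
First, a point of reference: the paper does not prove this lemma at all --- it is imported verbatim from \cite{zhedim}, so there is no in-text proof to match your argument against. Judged on its own terms, your linearization-plus-Girsanov strategy is the classical Brownian route, and it runs into a genuine gap exactly at the point you identify as the ``main obstacle'' and then wave through. Two concrete problems. (a) Your formula for $b_s$ does not reproduce the increment: with $b_s=\frac{\delta f_s}{\|\hat Z_s\|_{X_s}^2}\Psi_s^{\dagger}\hat Z_s$ one gets $b_s'\Psi_s\hat Z_s=\delta f_s\cdot\frac{\hat Z_s'\Psi_s^{\dagger}\Psi_s\hat Z_s}{\|\hat Z_s\|_{X_s}^2}=\delta f_s\cdot\frac{|P_s\hat Z_s|^2}{\hat Z_s'\Psi_s\hat Z_s}$, where $P_s=\Psi_s^{\dagger}\Psi_s$ is the orthogonal projection onto the range of $\Psi_s$; this equals $\delta f_s$ only if $|P_s\hat Z_s|^2=\hat Z_s'\Psi_s\hat Z_s$, which is false in general. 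The choice that works is $b_s=\frac{\delta f_s}{\|\hat Z_s\|_{X_s}^2}\hat Z_s$ (no pseudoinverse), for which $b_s'\Psi_s\hat Z_s=\delta f_s$ and, taking $\hat Z_s$ in the range of $\Psi_s$, $|b_s|\leq l_2\,|\hat Z_s|/\|\hat Z_s\|_{X_s}\leq l_2\|\Psi_s^{\dagger}\|_{N\times N}\sqrt{6m}$.

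(b) Even with that repair, Assumption \ref{ass0} is \emph{not} ``precisely what is needed'' for positivity of the Dol\'eans--Dade exponential. A jump of the chain from $e_i$ to $e_j$ gives $\Delta M_s=e_j-e_i$, so positivity requires $1+b_{s,j}-b_{s,i}>0$; the available bounds are $|b_{s,j}-b_{s,i}|\leq\sqrt{2}\,|b_s|\leq\sqrt{2}\,l_2\|\Psi_s^{\dagger}\|\sqrt{6m}$, or, using $\|\hat Z_s\|_{X_s}^2=\sum_{k\neq i}A_{ki}(\hat Z_{s,k}-\hat Z_{s,i})^2$, the state-dependent bound $|b_{s,j}-b_{s,i}|\leq l_2/\sqrt{A_{ji}(s)}$. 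Neither is forced below $1$ by $l_2\|\Psi_t^{\dagger}\|\sqrt{6m}<1$: the first is off by a factor $\sqrt 2$, and the second degenerates as a transition rate tends to zero. So under the stated hypothesis your $\Gamma$ can hit zero or go negative, and the conclusion $\Lambda_t\hat Y_t\leq 0\Rightarrow\hat Y_t\leq 0$ collapses. This is exactly the known obstruction for comparison theorems for BSDEs with jumps (the analogue of Royer's $\gamma>-1$ condition), and it is why \cite{Sam2,Sam3,zhedim} do not argue by change of measure: the proof in \cite{zhedim} works instead with the explicit jump structure of $\hat Y$ (the quadratic/positive-part estimates and the relation $|z|\leq\|\Psi_t^{\dagger}\|\sqrt{6m}\,\|z\|_{X_t}$, which is where the constant in Assumption \ref{ass0} actually enters). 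To salvage your route you would need either a strengthened hypothesis (e.g.\ $\sqrt{2}\,l_2\|\Psi_t^{\dagger}\|\sqrt{6m}<1$) or a genuinely different argument for the sign of $\hat Y$ that does not require $\Gamma>0$.
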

\subsection{RBSDEs with the Markov Chain Noise}
\indent Ramarimbahoaka, Yang and Elliott \cite{RYE} introduced a reflected BSDE (RBSDE) for the Markov Chain and derived the existence and uniqueness of the solutions. This is an equation of the form:
\begin{enumerate}[label=\roman{*}), ref=(\roman{*})]
\item  $V_t = \xi + \int_t^T f(s, V_s, Z_s) ds + K_T -K_t -\int_t^T  Z'_s dM_s$, $~~~0 \leq t \leq T$;
\item  $V_t \geq G_t $,  $~~~0 \leq t \leq T$;
\item  $\{K_t, t \in [0,T]\}$ is continuous and increasing, moreover, $K_0=0$ and \\
$\int_0^T (V_s - G_s) dK_s= 0$.
\end{enumerate}
\begin{lemma} Suppose we have:
\begin{enumerate}
 \item $\xi\in L^2(\mathcal{F}_T)$,
\item  a $\mathcal{P} \times \mathcal{B}(\mathbb{R}^{1+N})$ measurable function $f: \Omega \times [0, T] \times \mathbb{R} \times \mathbb{R}^N \rightarrow \mathbb{R}$ which is Lipschitz continuous, with constants $c'$ and $c''$, in the sense that, for any $t \in [0,T]$, $v_1,v_2 \in \mathbb{R}$ and $z_1,z_2 \in \mathbb{R}^N$, $t\in[0,T]$,\emph{}
\begin{equation}\label{Lipch}
|f(t,v_1,z_1) - f(t, v_2, z_2)| \leq c'|v_1-v_2| + c''\|z_1 -z_2\|_{X_t}
\end{equation}
and $c''$ satisfies
 \begin{equation}\label{c''}c''\|\Psi_t^{\dagger}\|_{N \times N} \sqrt{6m}<1, ~~~\text{ for any }~t \in [0,T],\end{equation}
 where $\Psi$ is given in \eqref{Psi} and $m>0$ is the bound of $\|A_t\|_{N\times N}$, for any $t\in[0,T]$.
\item \begin{equation}\label{Con_f}
 E \left[ \int_0^T |f^2(t,0,0)| dt\right] < \infty,
\end{equation}
\item a process $G$ called an \enquote{obstacle} which satisfies
\begin{equation}\label{Con_g}
 E \left[ \sup_{0\leq t \leq T} (G_t^+)^2 \right] < \infty.
\end{equation}
\end{enumerate}
Then there exists a solution $(V,Z,K)$, $V$ adapted and RCLL and $Z$ predictable, of the RBSDE i), ii), iii) above such that $V \in L^2_{\mathcal{F}}(0,T;\mathbb{R})$, $K_T\in L^2(\mathcal{F}_T)$ and $Z \in P^2_{\mathcal{F}}(0,T;\mathbb{R}^N)$, moreover, this solution is
unique up to indistinguishability for $Y$, $K$ and equality $d\langle
X,X\rangle_t$ $\times\mathbb{P}$-a.s. for $Z$.
\end{lemma}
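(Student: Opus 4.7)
The plan is to use a penalization method in the spirit of El Karoui et al., leveraging the comparison result Lemma \ref{CTBSDE} and the existence result Lemma \ref{existence} for BSDEs driven by the Markov chain. For each integer $n\geq 1$, I would consider the penalized BSDE
\begin{equation*}
V^n_t = \xi + \int_t^T f(s, V^n_s, Z^n_s)\,ds + n\int_t^T (V^n_s - G_s)^{-}\,ds - \int_t^T (Z^n_s)' dM_s,
\end{equation*}
whose driver $f_n(s,v,z)=f(s,v,z)+n(v-G_s)^{-}$ is Lipschitz with constant $c'+n$ in $v$ and still $c''$ in $z$; by \eqref{Con_f} and \eqref{Con_g} it satisfies the integrability requirement of Lemma \ref{existence}, giving a unique $(V^n,Z^n)\in L^2_{\mathcal{F}}(0,T;\mathbb{R})\times P^2_{\mathcal{F}}(0,T;\mathbb{R}^N)$. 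I then set $K^n_t := n\int_0^t (V^n_s-G_s)^{-}\,ds$, which is continuous, increasing, with $K^n_0=0$.

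Since $f_n\leq f_{n+1}$ and condition \eqref{c''} coincides with Assumption \ref{ass0} (with $l_2=c''$), the comparison Lemma \ref{CTBSDE} yields the monotonicity $V^n_t\leq V^{n+1}_t$ almost surely. I would then apply It\^o's formula to $|V^n_t|^2$ (carefully handling the jumps of $M$ via $\lel X,X\rir$ and Lemma \ref{Z2}), absorb the $\|Z^n_s\|^2_{X_s}$ term into the left-hand side using the Lipschitz constant $c''$ and the strict inequality \eqref{c''}, and exploit $V^n_s-G_s\geq V^n_s-G^+_s$ together with \eqref{Con_g} to derive an $n$-uniform estimate
\begin{equation*}
\sup_n\Big\{E\sup_{t\leq T}|V^n_t|^2+E\!\int_0^T\!\|Z^n_s\|^2_{X_s}\,ds+E|K^n_T|^2\Big\}<\infty,
\end{equation*}
the bound on $K^n_T$ following by isolating $K^n_T$ in the BSDE and using the previous two bounds.

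Monotone convergence then produces $V^n\uparrow V$ pointwise and, by dominated convergence coupled with the uniform $L^2$ bound, in $L^2_{\mathcal{F}}(0,T;\mathbb{R})$. Weak $L^2$ compactness gives (along a subsequence) $Z^n\rightharpoonup Z$ in $P^2_{\mathcal{F}}(0,T;\mathbb{R}^N)$ and $K^n\rightharpoonup K$ with $K_T\in L^2(\mathcal{F}_T)$; passing to the limit in the penalized BSDE produces (i). The uniform $L^2$ bound on $K^n_T=n\int_0^T(V^n_s-G_s)^{-}ds$ forces $(V_s-G_s)^{-}=0$, i.e.\ (ii), and $K$ inherits monotonicity and continuity from the absolutely continuous approximants $K^n$. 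The Skorohod minimality $\int_0^T(V_s-G_s)\,dK_s=0$ is the most delicate point: I would obtain it by showing that the integrand $(V^n_s-G_s)$ appearing in $\int_0^T(V^n_s-G_s)\,dK^n_s=-\int_0^T(V^n_s-G_s)^{-}\,dK^n_s\leq 0$ converges to $(V_s-G_s)\geq 0$ while $K^n$ converges to $K$, yielding $\int_0^T(V_s-G_s)\,dK_s\leq 0$ and hence equality.

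Uniqueness is more standard: given two solutions $(V,Z,K)$ and $(\tilde V,\tilde Z,\tilde K)$, It\^o's formula applied to $|V_t-\tilde V_t|^2$ together with the Skorohod condition (which makes the cross term $\int(V_s-\tilde V_s)\,d(K_s-\tilde K_s)$ nonpositive) and the Lipschitz inequality \eqref{Lipch}, again absorbing the $\|Z-\tilde Z\|^2_{X_s}$ term via \eqref{c''}, gives $V=\tilde V$, whence $Z=\tilde Z$ in the stated sense and $K=\tilde K$. The main obstacle I anticipate is precisely the Skorohod condition and the continuity of $K$ in a jumpy setting: because $V$ inherits jumps from the $Z'dM$ term, one must argue that the limiting increasing process arising from the absolutely continuous penalizations still charges only the contact set $\{V_{s-}=G_{s-}\}$, which relies on the right-continuity of $V$ and a careful weak-limit argument.
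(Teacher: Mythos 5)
This lemma is not proved in the present paper at all: it is imported from Ramarimbahoaka, Yang and Elliott \cite{RYE}, and both the introduction and the proof of Theorem \ref{CT} (where $f_n(t,v,z)=f(t,v,z)+n(v-G_t)^-$, $K^n_t=n\int_0^t(V^n_s-G_s)^-\,ds$, $V_t=\sup_n V^n_t$ and $E[\sup_{t}|K^n_t-K_t|^2]\to 0$ are explicitly invoked) show that the cited proof is precisely the penalization scheme you outline, including your correct observation that the $z$-Lipschitz constant of $f_n$ is still $c''$ so that \eqref{c''} and Assumption \ref{ass0} hold for every $n$ and Lemma \ref{CTBSDE} applies. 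Your proposal therefore follows essentially the same route as the referenced proof; the only cosmetic difference is that you invoke weak $L^2$ compactness for $Z^n$ and $K^n$, whereas the convergence actually used downstream in this paper is the strong one $E[\sup_t|K^n_t-K_t|^2]\to 0$, which is what makes the continuity of $K$ and the Skorohod condition go through cleanly.
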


\section{Estimate of the solutions to RBSDEs for the Markov Chain}
\begin{proposition}\label{es}
Suppose $\xi\in L^2(\mathcal{F}_T)$, $f$ satisfies \eqref{Lipch}, \eqref{c''}, \eqref{Con_f} and $G$ satisfies \eqref{Con_g}.  Let $(V, Z,K)$ be the solution of the RBSDE for the Markov Chain satisfying:
$$ \left \{
\begin{array}{ll} V_t = \xi + \int_t^T f(s, V_s, Z_s ) ds + K_T -K_t -\int_t^T  Z_s' dM_s,~~  0 \leq t \leq T;\\[2mm]
 V_t \geq G_t ,~~~  0 \leq t \leq T;\\[2mm]
 \{K_t, t \in [0,T]\}\mbox{ is continuous and increasing, moreover, }K_0=0\mbox{ and} \\[1mm]
\int_0^T (V_s - G_s) dK_s = 0.\end{array} \right. $$
Then there exists a constant $C>0$ depending on the Lipschitz constants $c',c''$ of $f$ and $T$ such that
\begin{align*}
& E [ \sup_{0\leq t \leq T}|V_t|^2 ]+ E [ \int_0^T \|Z_t\|^2_{X_t} dt]+ E [|K_T|^2]\\
 & \leq C( E [|\xi|^2+\sup\limits_{0 \leq s \leq T} (G_s^+)^2+(\int_0^T |f(s, 0, 0)| ds)^2].
\end{align*}
\end{proposition}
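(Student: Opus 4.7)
The plan is to apply It\^o's formula to $|V_t|^2$ on $[t,T]$ using the backward dynamics $dV_s = -f(s,V_s,Z_s)\,ds - dK_s + Z_s'\,dM_s$, which yields
\begin{align*}
|V_t|^2 + \int_t^T Z_s'\,d[M,M]_s Z_s &= |\xi|^2 + 2\int_t^T V_s f(s,V_s,Z_s)\,ds \\
&\quad + 2\int_t^T V_{s-}\,dK_s - 2\int_t^T V_{s-} Z_s'\,dM_s.
\end{align*}
Taking expectations, the stochastic integral vanishes and, since $[M,M]-\langle M,X\rangle$ is a martingale, the quadratic variation term becomes $E[\int_t^T \|Z_s\|^2_{X_s}\,ds]$ in the spirit of Lemma \ref{Z2}. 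The Skorokhod condition $\int_0^T(V_s-G_s)\,dK_s=0$ together with the continuity of $K$ replaces $V_{s-}\,dK_s$ by $G_s\,dK_s \leq G_s^+\,dK_s$, so the $dK$ term is controlled by $E[\sup_{s\leq T}(G_s^+)(K_T-K_t)]$.

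Next I would use the Lipschitz bound $|f(s,V_s,Z_s)|\leq |f(s,0,0)| + c'|V_s| + c''\|Z_s\|_{X_s}$ and repeated applications of Young's inequality $2ab\leq a^2/\alpha + \alpha b^2$ to dominate $2V_s f(s,V_s,Z_s)$ by $C|V_s|^2 + |f(s,0,0)|^2 + \alpha\|Z_s\|^2_{X_s}$, and $2G_s^+(K_T-K_t)$ by $\tfrac{1}{\beta}\sup_s(G_s^+)^2 + \beta\, E[|K_T|^2]$. Choosing $\alpha$ small enough to absorb the $\|Z\|^2_{X_s}$ term into the left-hand side and applying Gronwall's inequality in the variable $t$ will give the estimate for $E[|V_t|^2]$ and, after sending $t=0$, for $E[\int_0^T\|Z_s\|^2_{X_s}\,ds]$, modulo a pending bound on $E[|K_T|^2]$.

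To close the loop on $K_T$, I would use the RBSDE equation itself at $t=0$,
\[
K_T = V_0 - \xi - \int_0^T f(s,V_s,Z_s)\,ds + \int_0^T Z_s'\,dM_s,
\]
square, take expectation, and apply Lemma \ref{Z2} plus Cauchy--Schwarz to get $E[|K_T|^2] \leq C\bigl(E[|V_0|^2] + E[|\xi|^2] + E[\int_0^T|f(s,0,0)|^2\,ds] + E[\int_0^T\|Z_s\|^2_{X_s}\,ds]\bigr)$. Substituting this into the earlier inequality (choosing $\beta$ small) yields the $V$ and $Z$ bounds, and then feeds back to give the $K_T$ bound. Finally, for $E[\sup_t|V_t|^2]$ I would return to the It\^o identity written as a supremum, bound the martingale term $\sup_t|\int_0^t V_{s-}Z_s'\,dM_s|$ by Burkholder--Davis--Gundy as $C E[(\int_0^T V_{s-}^2\|Z_s\|^2_{X_s}\,ds)^{1/2}] \leq \tfrac{1}{2}E[\sup_t V_t^2] + C E[\int_0^T\|Z_s\|^2_{X_s}\,ds]$, and absorb the $\sup$ term on the left.

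The main obstacle is the bookkeeping: the estimates for $V$, $Z$, and $K_T$ are mutually entangled through the Skorokhod and driver terms, so the Young parameters $\alpha,\beta$ must be calibrated in the correct order (first absorbing $\|Z\|^2$, then $|K_T|^2$, finally $\sup|V|^2$) to avoid circularity. Lipschitz continuity of $f$ is enough here; Assumption \ref{ass0} is not needed for the a priori estimate, only for the comparison theorem invoked elsewhere.
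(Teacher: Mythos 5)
Your proposal follows essentially the same route as the paper: It\^o/product rule on $|V_t|^2$ with the jump term reduced to $\int_t^T\|Z_s\|^2_{X_s}\,ds$ plus a martingale, the Skorokhod condition converting $\int V_s\,dK_s$ into $\int G_s\,dK_s$, Young's inequality with parameters chosen in the order you describe, the $K_T$ estimate read off from the equation at $t=0$ and fed back in, and a maximal inequality for $E[\sup_t|V_t|^2]$ (the paper uses Doob on $\sup_t|\int_t^T Z_s'\,dM_s|^2$ via the equation rather than BDG on $\int_0^t V_{s-}Z_s'\,dM_s$, but both work). Your closing remark that condition \eqref{c''} is not needed for the a priori estimate is also correct.

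One small but genuine discrepancy: you apply Young pointwise to get $2|V_s||f(s,0,0)|\le C|V_s|^2+|f(s,0,0)|^2$, which leads to a bound in terms of $E[\int_0^T|f(s,0,0)|^2\,ds]$; the proposition as stated bounds by $E[(\int_0^T|f(s,0,0)|\,ds)^2]$, and since $(\int|f|\,ds)^2\le T\int|f|^2\,ds$ the inequality goes the wrong way, so your version proves a weaker statement. The fix is what the paper does: bound $2\int_t^T|V_s||f(s,0,0)|\,ds\le 2\sup_{s}|V_s|\int_t^T|f(s,0,0)|\,ds$ and only then apply Young, keeping $(\int|f(s,0,0)|\,ds)^2$ intact (and similarly avoid Cauchy--Schwarz on that term in the $K_T$ estimate).
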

\begin{proof} Applying the product rule to $|V_t|^2$, we derive for any $t\in[0,T],$
\begin{align}\label{v1v221}
\nonumber
|V_t|^2
& =|\xi|^2 -2 \int_t^T V_{s-} dV_{s}  - \sum_{t \leq s \leq T} \Delta V_s\Delta V_s \\
\nonumber
&= |\xi|^2+2 \int_t^T V_{s}  f(s,V_s, Z_s) ds+2 \int_t^T V_sdK_s \\
& \quad -2 \int_t^T V_{s-} Z_s' dM_s - \sum_{t \leq s \leq T} \Delta V_s\Delta V_s,
\end{align}
where
\begin{align}\label{deltav1v21}
\nonumber
\sum_{t\leq s \leq T}  \Delta V_s\Delta V_s
&= \sum_{t\leq s \leq T}(Z_s' \Delta X_s)( Z_s'\Delta X_s ) = \sum_{t\leq s \leq T} Z_s' \Delta X_s \Delta X_s' Z_s  \\
& =  \int_t^T Z_s'  (dL_s + d\lel X,X\rir_s) Z_s =  \int_t^T Z_s' dL_sZ_s + \int_t^T \|Z_s \|_{X_s}^2 ds.
\end{align}
As $\{K_t, t \in [0,T]\}$ is an increasing process, and moreover, for any $t \in[0,T]$, $V_t \geq G_t $, we know for any $t\in[0,T]$, $$0\leq\int_t^T (V_s- G_s) dK_s\leq\int_0^T (V_s- G_s) dK_s=0.$$ So we have
\begin{align}\label{k1k21}
  \int_t^T V_s dK_s
= \int_t^T  G_sdK_s,\quad t\in[0,T] .
\end{align}
By (\ref{v1v221}), (\ref{deltav1v21}) and (\ref{k1k21}), we obtain for any $t\in[0,T],$
$$\begin{array}{ll}
|V_t|^2
& = |\xi|^2+2 \int_t^T V_{s} f(s,V_s, Z_s) ds+2 \int_t^T  G_sdK_s \\[3mm]
& \quad-2 \int_t^T  V_{s-} Z_s' dM_s -  \int_t^T Z_s' dL_sZ_s- \int_t^T \|Z_s \|_{X_s}^2 ds.
\end{array}
$$
Now, let $\beta >0$ be an arbitrary constant and write $c=\max\{c',c''\}$. Using It$\hat{\text{o}}$'s formula for $e^{\beta t}|V_t|^2$, we deduce
 for any $t\in[0,T],$
$$\begin{array}{ll}
E [ e^{\beta t} |V_t|^2 ]+ E [ \int_t^T \beta |V_s|^2 e^{\beta s} ds ]  + E[ \int_t^T e^{\beta s}\|Z_s\|^2_{X_s} ds ] \\[4mm]
= E [ e^{\beta T} |\xi|^2 ] + 2 E [ \int_t^T e^{\beta s} V_s f(s, V_s, Z_s) ds ]+ 2 E [ \int_t^T e^{\beta s} G_s dK_s]\\[4mm]
\leq  E [e^{\beta T}|\xi|^2] + 2 E[ \int_t^T e^{\beta s} |V_s|(|f(s,0,0)|+|f(s, V_s, Z_s)-f(s,0,0)|)ds] \\[4mm]
 \quad + 2 e^{\beta T}E [ \int_t^T  G_s dK_s] \end{array}$$
 $$\begin{array}{ll}
\leq  E [e^{\beta T}|\xi|^2] + 2 E[ \int_t^T e^{\beta s}|V_s| (|f(s, 0, 0)| + c |V_s| + c \|Z_s\|_{X_s})ds] \\[4mm]
\quad  + 2 e^{\beta T}E [ \int_t^T  G_s dK_s] \\[4mm]
 \leq E [e^{\beta T}|\xi|^2] + 2E [ \int_t^Te^{\beta s}|V_s|\cdot | f(s, 0, 0)|ds]  + (2c+3c^2) E[\int_t^T e^{\beta s}|V_s|^2  ds] \\[4mm]
\quad + \dfrac{1}{3} E [ \int_t^T e^{\beta s}\|Z_s\|^2_{X_s} ds]+2 e^{\beta T}E [K_T \sup\limits_{0 \leq s\leq T} (G_s^+)  ]
\\[4mm]
\leq E [e^{\beta T}|\xi|^2] +2 E [ \sup\limits_{s\in[t,T]}e^{\frac{1}{2}\beta s}|V_s| \int_t^Te^{\frac{1}{2}\beta s} |f(s, 0, 0)| ds] \\[4mm]
\quad + (2c+3c^2) E[\int_t^T e^{\beta s}|V_s|^2  ds] + \dfrac{1}{3} E [ \int_t^T e^{\beta s}\|Z_s\|^2_{X_s} ds]\\[4mm]
\quad + \dfrac{e^{2\beta T}}{\alpha}E [ \sup\limits_{0 \leq s \leq T} (G_s^+)^2] + \alpha E [ K_T^2]\\[4mm]
\leq E [e^{\beta T}|\xi|^2] +\gamma E [ \sup\limits_{s\in[t,T]}e^{\beta s}|V_s|^2]+\dfrac{1}{\gamma}E[ (\int_t^Te^{\frac{1}{2}\beta s} |f(s, 0, 0)| ds)^2] \\[4mm]
\quad + (2c+3c^2) E[\int_t^T e^{\beta s}|V_s|^2  ds] + \dfrac{1}{3} E [ \int_t^T e^{\beta s}\|Z_s\|^2_{X_s} ds]\\[4mm]
\quad + \dfrac{e^{2\beta T}}{\alpha}E [ \sup\limits_{0 \leq s\leq T} (G_s^+)^2] + \alpha E [ K_T^2],
\end{array}
$$
where $\alpha,\gamma>0$ are two arbitrary constants. Therefore, for any $t\in[0,T],$
\begin{align}\label{apriory21}
\nonumber&
E [e^{\beta t} |V_t|^2 ]  + (\beta-2c-3c^2)E[ \int_t^T |V_s|^2 e^{\beta s} ds] + \frac{2}{3} E [ \int_t^T e^{\beta s}\|Z_s\|^2_{X_s} ds]  \\
\nonumber&\leq E [e^{\beta T}|\xi|^2]+\gamma E [ \sup_{s\in[t,T]}e^{\beta s}|V_s|^2]+\frac{1}{\gamma}E[ (\int_t^Te^{\frac{1}{2}\beta s} |f(s, 0, 0)| ds)^2]\\
&\quad  +\frac{e^{2\beta T}}{\alpha}E [ \sup_{0 \leq s \leq T} (G_s^+)^2] + \alpha E [ K_T^2].
\end{align}
We now give an estimate for $ E [ K_T^2]$. Because
$K_T  = V_0 - \xi - \int_0^T f(t, V_t, Z_t) dt $ $+ \int_0^T Z_{t}' dM_t,$
by Lemma \ref{Z2}, we deduce
\begin{align*}
E [ |K_T|^2] & \leq 4 E [  |V_0|^2 + |\xi|^2 + |\int_0^T f(t, V_t, Z_t) dt|^2 + |\int_0^T Z_{t}'dM_t|^2 ] \\
& \leq 4 E [ |V_0|^2 + |\xi|^2+ \int_0^T \|Z_t\|^2_{X_t} dt]\\
& \quad + 4  E [ (\int_0^T (|f(t,0,0)|+ c |V_t| + c \|Z_t\|_{X_t})dt)^2]
\end{align*}
\begin{align*}
& \leq 4 E [ |V_0|^2 + |\xi|^2+\int_0^T \|Z_t\|^2_{X_t} dt]+ 12  E [ (\int_0^T |f(t,0,0)|dt)^2]\\
& \quad+ 12 c^2 E[ (\int_0^T|V_t|dt)^2 ]+12 c^2E[ (\int_0^T\|Z_t\|_{X_t}dt)^2]\\
& = 4  E[|\xi|^2+ |V_0|^2] +(4+12c^2T)E[  \int_0^T \|Z_t\|^2_{X_t} dt]\\
& \quad +12 E [(\int_0^T| f(t,0,0)|dt)^2]+12c^2TE[\int_0^T|V_t|^2dt].
\end{align*}
So, there is a constant $C_1>0$ depending on $c$ and $T$ such that
\begin{align}\label{estimate_K11}
\nonumber &E[|K_T|^2] \\
\nonumber&\leq C_1( E[|\xi|^2 + |V_0|^2 +(\int_0^T |f(t,0,0)|dt)^2+ \int_0^T (|V_t|^2 + \| Z_t\|^2_{X_t}) dt])\\
\nonumber&\leq C_1(E[e^{\beta T}|\xi|^2 + |V_0|^2 +(\int_0^T e^{\frac{1}{2}\beta t}|f(t,0,0)|dt)^2]\\
&\quad + C_1E[\int_0^T e^{\beta t}(|V_t|^2 + \| Z_t\|^2_{X_t}) dt])
\end{align}
Then we consider $t=0$ in \eqref{apriory21}. Set $\alpha =\dfrac{1}{3C_1} $ and $\beta=2c+3c^2+\dfrac{2}{3}$, we obtain
\begin{align}\label{estimate_V0}
\nonumber&E[|V_0 |^2]+ E [ \int_0^T e^{\beta s}(|V_s|^2+\|Z_s\|^2_{X_s})ds]\\
 \nonumber&\leq4e^{\beta T} E [|\xi|^2]+3\gamma E [ \sup_{s\in[0,T]}e^{\beta s}|V_s|^2]+(\frac{3}{\gamma}+1)E[ (\int_0^T e^{\frac{1}{2}\beta s} |f(s, 0, 0)| ds)^2]\\
& \quad +9C_1e^{2\beta T}E [ \sup_{0 \leq s \leq T} (G_s^+)^2].
\end{align}
Then by (\ref{estimate_K11}) and (\ref{estimate_V0}), we derive
\begin{align}\label{estimate_K1}
\nonumber E[|K_T|^2] &\leq5  C_1e^{\beta T}E[|\xi|^2 ] + C_1(\frac{3}{\gamma}+2)E[(\int_0^T e^{\frac{1}{2}\beta t}|f(t,0,0)|dt)^2]\\
&\quad+3 C_1\gamma E [ \sup_{s\in[0,T]}e^{\beta s}|V_s|^2]+9C_1^2e^{2\beta T}E [ \sup_{0 \leq s \leq T} (G_s^+)^2].
\end{align}
Because $ V_t = \xi + \int_t^T f(s, V_s, Z_s ) ds + K_T -K_t -\int_t^T  Z_s' dM_s,~  0 \leq t \leq T$, we have for $t\in[0,T],$
$$\begin{array}{ll}E[\sup\limits_{0\leq t \leq T}|V_t|^2]\\[4mm]
\leq E[4|\xi|^2] + 4(\int_0^T| f(s, V_s, Z_s) |ds)^2 + 4|K_T|^2 + \sup\limits_{0\leq t \leq T}4|\int_t^T Z_s' dM_s|^2] \\[4mm]
 \leq E[4|\xi|^2 + 12( \int_0^T |f(s,0,0)|ds)^2 + 12Tc^2\int_0^T( |V_s|^2+\|Z_s\|^2_{X_s})ds] \\[4mm]
\quad+ E[4 |K_T|^2 + 4\sup\limits_{0\leq t \leq T}|\int_t^T  Z_s' dM_s|^2]\\[4mm]
\leq E[4e^{\beta T}|\xi|^2 + 12( \int_0^T e^{\frac{1}{2}\beta s}|f(s,0,0)|ds)^2 + 12Tc^2\int_0^Te^{\beta s} (|V_s|^2+\|Z_s\|^2_{X_s})ds] \\[4mm]
 \quad+E[ 4 |K_T|^2 + 4\sup\limits_{0\leq t \leq T}|\int_t^T  Z_s' dM_s|^2].
\end{array}
$$
Using Doob's inequality and Lemma \ref{Z2}, we know
\begin{align*}
 E [ \sup_{0\leq t \leq T}|\int_t^T Z_s' dM_s|^2 ] & =E [ \sup_{0\leq t \leq T}|\int_0^T Z_s' dM_s-\int_0^t Z_s' dM_s|^2]\\
  &\leq 2 E [|\int_0^T Z_s' dM_s|^2+ \sup_{0\leq t \leq T}|\int_0^t Z_s' dM_s|^2] \\
& \leq 10 E [ |\int_0^T Z'_sdM_s|^2]= 10 E[ \int_0^T \|Z_s\|^2_{X_s}ds]\\
&\leq10 E[ \int_0^T e^{\beta s}\|Z_s\|^2_{X_s}ds].
\end{align*}
Hence, with the help of (\ref{estimate_V0}) we conclude
\begin{align*}
&~ E [ \sup_{0\leq t \leq T}|V_t|^2 ]\\
& \leq4(1+5C_1)e^{\beta T}E[|\xi|^2] + (12+\frac{12 C_1}{\gamma}+8C_1)E[( \int_0^T e^{\frac{1}{2}\beta s}f(s,0,0)ds)^2]\\
&\quad+12 C_1\gamma E [ \sup_{s\in[0,T]}e^{\beta s}|V_s|^2]+36C_1^2e^{2\beta T}E [ \sup_{0 \leq s \leq T} (G_s^+)^2] \\
& \quad+ 12Tc^2E[\int_0^Te^{\beta s} (|V_s|^2+\|Z_s\|^2_{X_s})ds] + 40E[\int_0^T e^{\beta s}\|Z_s\|^2_{X_s}ds]\\
& \leq4(1+5C_1)e^{\beta T}E[|\xi|^2] + (12+\frac{12 C_1}{\gamma}+8C_1)E[( \int_0^T e^{\frac{1}{2}\beta s}f(s,0,0)ds)^2]\\
&\quad+12 C_1\gamma E [ \sup_{s\in[0,T]}e^{\beta s}|V_s|^2]+36C_1^2e^{2\beta T}E [ \sup_{0 \leq s \leq T} (G_s^+)^2] \\
& \quad+ (12Tc^2 + 40)E[\int_0^Te^{\beta s} (|V_s|^2+\|Z_s\|^2_{X_s})ds] 
\end{align*}
\begin{align*}
& \leq4(41+5C_1 +12Tc^2)e^{\beta T}E[|\xi|^2]\\
&\quad + (52+8C_1+12Tc^2+\frac{ 12C_1+36Tc^2+120}{\gamma})E[( \int_0^T e^{\frac{1}{2}\beta s}f(s,0,0)ds)^2]\\
&\quad+36C_1(C_1+3Tc^2+10)e^{2\beta T}E [ \sup_{0 \leq s \leq T} (G_s^+)^2] \\
& \quad+12(C_1+3Tc^2+10)\gamma E [ \sup_{s\in[0,T]}e^{\beta s}|V_s|^2].
\end{align*}
Noticing $E [ \sup_{s\in[0,T]}e^{\beta s}|V_s|^2]\leq e^{\beta T}E [ \sup_{s\in[0,T]}|V_s|^2]$, set $\gamma=\dfrac{1}{24(C_1+3Tc^2+10)e^{\beta T}}$, we deduce there exists a constant $C_2>0$ depending on $T$ and $c$ such that
\begin{align}\label{supV}
E [ \sup_{0\leq t \leq T}|V_t|^2 ]
 \leq C_2( E [|\xi|^2+\sup\limits_{0 \leq s \leq T} (G_s^+)^2+(\int_0^T |f(s, 0, 0)| ds)^2].
\end{align}
By (\ref{estimate_V0}), (\ref{estimate_K1}) and (\ref{supV}) we know there exists a constant $C>0$ depending on $T$ and $c$ such that
\begin{align*}
& E [ \sup_{0\leq t \leq T}|V_t|^2 ]+ E [ \int_0^T \|Z_t\|^2_{X_t} dt]+ E [|K_T|^2]\\
 & \leq C( E [|\xi|^2+\sup\limits_{0 \leq s \leq T} (G_s^+)^2+(\int_0^T |f(s, 0, 0)| ds)^2].
\end{align*}
\end{proof}
\indent Similarly we obtain the following result for the solutions to BSDEs driven by the Markov Chain.
\begin{proposition}\label{esBSDE}
Suppose $\xi\in L^2(\mathcal{F}_T)$ and $f$ satisfies \eqref{Lipch}, \eqref{Con_f}.  Let $(Y, Z)$ be the solution of the BSDE for the Markov Chain as following
$$ Y_t = \xi + \int_t^T f(s,Y_s, Z_s ) ds -\int_t^T  Z_s' dM_s,~~  0 \leq t \leq T. $$
Then there exists a constant $C'>0$ depending on the Lipschitz constants $c',c''$ of $f$ and $T$ such that
\begin{align*}
E [ \sup_{0\leq t \leq T}|Y_t|^2 ]+ E [ \int_0^T \|Z_t\|^2_{X_t} dt]
 \leq C'( E [|\xi|^2+(\int_0^T |f(s, 0, 0)| ds)^2].
\end{align*}
\end{proposition}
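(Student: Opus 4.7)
The plan is to mimic the proof of Proposition \ref{es}, which becomes significantly simpler in the BSDE setting because there is no increasing process $K$ and no obstacle $G$ to control. First I would apply the product rule to $|Y_t|^2$ (using the semimartingale decomposition of $Y$) and then Itô's formula to $e^{\beta t}|Y_t|^2$ for an arbitrary $\beta>0$. The jump term $\sum \Delta Y_s \Delta Y_s$ decomposes as $\int_t^T Z_s' dL_s Z_s + \int_t^T \|Z_s\|^2_{X_s} ds$ exactly as in \eqref{deltav1v21}, so that after taking expectations the $dL$-martingale and the $dM$-martingale vanish, leaving
\begin{align*}
E[e^{\beta t}|Y_t|^2] &+ \beta E\Bigl[\int_t^T e^{\beta s}|Y_s|^2 ds\Bigr] + E\Bigl[\int_t^T e^{\beta s}\|Z_s\|^2_{X_s} ds\Bigr] \\
&= E[e^{\beta T}|\xi|^2] + 2 E\Bigl[\int_t^T e^{\beta s} Y_s f(s,Y_s,Z_s)\, ds\Bigr].
\end{align*}

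Next I would bound the right-hand side using $|f(s,Y_s,Z_s)|\leq |f(s,0,0)|+c|Y_s|+c\|Z_s\|_{X_s}$, where $c=\max\{c',c''\}$, together with the elementary inequalities $2c|Y_s|\|Z_s\|_{X_s}\leq 3c^2|Y_s|^2+\tfrac{1}{3}\|Z_s\|^2_{X_s}$ and $2|Y_s||f(s,0,0)|\leq \gamma e^{\beta s}|Y_s|^2 \cdot (\text{sup-replacement})+\tfrac{1}{\gamma}(\int_t^T e^{\frac{1}{2}\beta s}|f(s,0,0)|ds)^2$ as in \eqref{apriory21}. Choosing $\beta = 2c+3c^2+\tfrac{2}{3}$ absorbs the $|Y_s|^2$ term inside the integral, and setting $t=0$ yields a bound of the form
\begin{equation*}
E[|Y_0|^2] + E\Bigl[\int_0^T e^{\beta s}(|Y_s|^2+\|Z_s\|^2_{X_s})\,ds\Bigr] \leq C\Bigl(E[|\xi|^2] + E\Bigl[\Bigl(\int_0^T|f(s,0,0)|ds\Bigr)^2\Bigr] + \gamma E[\sup_s e^{\beta s}|Y_s|^2]\Bigr).
\end{equation*}

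Finally, to control $E[\sup_{0\le t\le T}|Y_t|^2]$ I would use the representation $Y_t = \xi + \int_t^T f(s,Y_s,Z_s)ds -\int_t^T Z_s'dM_s$, the inequality $(a+b+c)^2\le 3(a^2+b^2+c^2)$, the Lipschitz bound on $f$, and Doob's inequality combined with Lemma \ref{Z2} to control $E[\sup_t |\int_t^T Z_s'dM_s|^2]$ by $10 E[\int_0^T \|Z_s\|^2_{X_s}ds]$, exactly mirroring the corresponding step in the proof of Proposition \ref{es}. Plugging in the previously derived bound for $E[\int_0^T \|Z_s\|^2_{X_s}ds]$ and then choosing $\gamma$ small enough (proportional to $e^{-\beta T}/(1+Tc^2)$) to absorb the $\gamma E[\sup_s e^{\beta s}|Y_s|^2]$ contribution into the left-hand side produces the claimed estimate with a constant $C'$ depending only on $c$ and $T$.

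No step here is genuinely delicate: the argument is strictly easier than the reflected case since we never need the auxiliary estimate for $E[K_T^2]$ nor the obstacle-related terms. The only bookkeeping care needed is to make sure the constant $\gamma$ used to absorb the $\sup|Y|^2$ term is fixed only \emph{after} $\beta$ (and hence $C_1$-like constants) has been fixed, so that the absorption is legitimate; this is the same ordering of constant choices already used in the RBSDE proof.
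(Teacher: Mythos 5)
Your proposal is correct and follows exactly the route the paper intends: Proposition \ref{esBSDE} is stated with only the remark that it is obtained ``similarly'' to Proposition \ref{es}, and your argument is precisely that proof with the $K$- and $G$-terms deleted, the same choice $\beta=2c+3c^2+\tfrac{2}{3}$, the same Doob/Lemma \ref{Z2} bound of $10\,E[\int_0^T\|Z_s\|^2_{X_s}ds]$ for the supremum of the stochastic integral, and the same final absorption via a small $\gamma$. The one point you rightly flag --- fixing $\gamma$ only after $\beta$ --- is handled in the same order as in the paper, so there is nothing to add.
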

\section{Continuous dependence
property of solutions to RBSDEs for the Markov Chain}
\begin{proposition}\label{cdp}
Suppose $f$ satisfies \eqref{Lipch}, \eqref{c''}, \eqref{Con_f}, $\xi\in L^2(\mathcal{F}_T)$, $G^{(i)}$ satisfies \eqref{Con_g}, and $\{\varphi^{(i)}_t;~t\in[0,T]\}$ is a predictable process satisfying
\begin{equation}\label{Con_v}
 E [ \int_0^T |\varphi_t^{(i)}|^2 dt] < \infty,
\end{equation}where $i=1,2$.  Let $(V^{(i)}, Z^{(i)},K^{(i)})$ be the solution of RBSDE for the Markov Chain as following
$$ \left \{
\begin{array}{ll} V^{(i)}_t = \xi_i + \int_t^T( f(s, V^{(i)}_s, Z^{(i)}_s ) +\varphi_s^{(i)})ds + K^{(i)}_T -K^{(i)}_t -\int_t^T  (Z^{(i)}_s)' dM_s,~t \in[0,T];\\[2mm]
 V^{(i)}_t \geq G^{(i)}_t ,~~~   t \in[0,T];\\[2mm]
 \{K^{(i)}_t, t \in [0,T]\}\mbox{ is continuous and increasing, moreover, }K^{(i)}_0=0\mbox{ and} \\[1mm]
\int_0^T (V^{(i)}_s - G^{(i)}_s) dK^{(i)}_s = 0,\end{array} \right. $$
where $i=1,2$. Set
$(v,z)=(V^{(1)}-V^{(2)},Z^{(1)}-Z^{(2)}),~k=K^{(1)}-K^{(2)}.$ Then there exists a constant $\bar{C}>0$ depending on the Lipschitz constants $c',c''$ of $f$ and $T$ such that
$$
\begin{array}{ll}
\sup\limits_{t\in[0,T]}E[|v_t|^2]+E[\int_0^T \|z_t \|_{X_t}^2 dt]+\sup\limits_{t\in[0,T]}E[|k_T-k_t|^2]\\[4mm]
 \leq \bar{C}(E[|\xi_1-\xi_2|^2+
(\int_0^T  |\varphi_t^{(1)}-\varphi_t^{(2)} |dt)^2+  \sup\limits_{t\in[0,T]}|G^{(1)}_t-G^{(2)}_t|^2]).
\end{array}
$$
Moreover,
$$
\begin{array}{ll}
E[\sup\limits_{t\in[0,T]}|v_t+k_t|^2]+\int_0^T \|z_t \|_{X_t}^2 dt]\\[4mm]
 \leq \bar{C}E[|\xi_1-\xi_2|^2+
(\int_0^T  |\varphi_t^{(1)}-\varphi_t^{(2)}| dt)^2+  \sup\limits_{t\in[0,T]}|G^{(1)}_t-G^{(2)}_t|^2].
\end{array}
$$ \end{proposition}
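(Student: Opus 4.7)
The strategy mirrors that of Proposition \ref{es}. Writing $\Delta f_s := f(s,V^{(1)}_s,Z^{(1)}_s) - f(s,V^{(2)}_s,Z^{(2)}_s)$ and $\Delta\varphi_s := \varphi^{(1)}_s - \varphi^{(2)}_s$, one has $dv_s = -(\Delta f_s + \Delta\varphi_s)ds - dk_s + z_s'dM_s$. I would apply It\^{o}'s formula to $e^{\beta s}|v_s|^2$ on $[t,T]$, take expectations so that the $dM$-integral vanishes, and use Lemma \ref{Z2} on the jump part to replace $\sum(\Delta v)^2$ by $\int \|z_s\|_{X_s}^2 ds$. The right-hand side then contains $2E[\int_t^T e^{\beta s} v_s(\Delta f_s + \Delta\varphi_s)ds]$ plus the reflection term $2E[\int_t^T e^{\beta s} v_s\,dk_s]$. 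I would bound $|\Delta f_s|$ via the Lipschitz inequality \eqref{Lipch} and use Young's inequality to absorb a fixed fraction of $\int \|z_s\|_{X_s}^2 ds$ into the left-hand side, then choose $\beta$ large enough to make the coefficient of $\int |v_s|^2 ds$ nonpositive.

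The crux is the reflection term $E[\int_t^T v_s\,dk_s]$. Because $V^{(i)}_s - G^{(i)}_s \geq 0$, $dK^{(i)}$ is a nonnegative measure, and $\int_0^T(V^{(i)}_s-G^{(i)}_s)dK^{(i)}_s = 0$, the product $(V^{(i)}_s - G^{(i)}_s)dK^{(i)}_s$ vanishes as a measure; in particular $\int_t^T V^{(i)}_s dK^{(i)}_s = \int_t^T G^{(i)}_s dK^{(i)}_s$ for every $t$. Expanding
\[
v_s\,dk_s = V^{(1)}_s dK^{(1)}_s - V^{(1)}_s dK^{(2)}_s - V^{(2)}_s dK^{(1)}_s + V^{(2)}_s dK^{(2)}_s,
\]
substituting the above identity into the two \emph{diagonal} terms, and using $V^{(1)}\geq G^{(1)}$, $V^{(2)}\geq G^{(2)}$ to bound the two cross terms gives
\[
\int_t^T e^{\beta s} v_s\,dk_s \leq e^{\beta T}\sup_{0\leq s\leq T}|G^{(1)}_s-G^{(2)}_s|\cdot(K^{(1)}_T+K^{(2)}_T).
\]
Another Young split into $\varepsilon^{-1}E[\sup_s|G^{(1)}_s-G^{(2)}_s|^2]$ plus $\varepsilon E[(K^{(1)}_T+K^{(2)}_T)^2]$, combined with the a priori bound on each $E[(K^{(i)}_T)^2]$ from Proposition \ref{es}, expresses this in terms of the data and yields the desired estimate on $\sup_t E[|v_t|^2] + E[\int_0^T \|z_s\|_{X_s}^2 ds]$.

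The estimate on $\sup_t E[|k_T-k_t|^2]$ follows by solving for the $K$-difference from the two RBSDEs,
\[
k_T-k_t = v_t - (\xi_1-\xi_2) - \int_t^T(\Delta f_s+\Delta\varphi_s)ds + \int_t^T z_s'dM_s,
\]
squaring, taking expectations and reusing Lemma \ref{Z2} together with the bounds just obtained. For the second displayed inequality, differencing the forward forms of the two equations eliminates the $dK^{(i)}$ terms individually and gives $v_t + k_t = v_0 - \int_0^t(\Delta f_s+\Delta\varphi_s)ds + \int_0^t z_s' dM_s$. Taking the supremum over $t$ and applying Doob's inequality (via Lemma \ref{Z2}) to the martingale part, combined with the Lipschitz bound on $\Delta f$ and the estimates already established, produces the claimed bound on $E[\sup_t |v_t+k_t|^2]$.

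The only nontrivial step is the handling of $E[\int v_s\,dk_s]$: its sign is not manifest and requires the obstacle/Skorokhod relations in conjunction with the a priori $L^2$-bound on $K^{(i)}_T$. Everything else is routine constant-tracking following the template of Proposition \ref{es}.
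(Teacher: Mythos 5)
Your overall architecture (product rule on $|v|^2$, Lipschitz plus Young's inequality to absorb a fraction of $\int\|z_s\|^2_{X_s}ds$, the Skorokhod identity $\int_t^T V^{(i)}_s dK^{(i)}_s=\int_t^T G^{(i)}_s dK^{(i)}_s$ together with $V^{(i)}\geq G^{(i)}$ to reduce the reflection term to $\int_t^T(G^{(1)}_s-G^{(2)}_s)\,dk_s$, then the representations of $k_T-k_t$ and of $v_t+k_t$ read off from the equations) coincides with the paper's. The genuine gap is in how you dispose of $\int_t^T(G^{(1)}_s-G^{(2)}_s)\,dk_s$. You bound it by $\sup_s|G^{(1)}_s-G^{(2)}_s|\cdot(K^{(1)}_T+K^{(2)}_T)$ and split by Young's inequality into $\varepsilon^{-1}E[\sup_s|G^{(1)}_s-G^{(2)}_s|^2]+\varepsilon E[(K^{(1)}_T+K^{(2)}_T)^2]$, controlling the last term by the a priori estimate of Proposition \ref{es}. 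But that a priori bound is expressed in the data of each equation \emph{separately} ($|\xi_i|^2$, $\sup_s(G^{(i)+}_s)^2$, $(\int_0^T(|f(s,0,0)|+|\varphi^{(i)}_s|)ds)^2$), not in the \emph{differences} $\xi_1-\xi_2$, $G^{(1)}-G^{(2)}$, $\varphi^{(1)}-\varphi^{(2)}$. For any fixed $\varepsilon>0$ you are therefore left with an additive term $\varepsilon C_0$, where $C_0>0$ is a fixed constant that does not vanish when the two data sets coincide; and you cannot let $\varepsilon\to 0$ because of the $\varepsilon^{-1}$ multiplying the $G$-difference. Your argument thus yields only $\sup_t E[|v_t|^2]+\cdots\leq \bar C\,E[\text{differences}]+\varepsilon C_0$ (or, via Cauchy--Schwarz, a cross term $(E[\sup_s|G^{(1)}_s-G^{(2)}_s|^2])^{1/2}C_0^{1/2}$), which is strictly weaker than the stated proposition.

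The paper closes the loop differently: it bounds $2E[\int_t^T(G^{(1)}_s-G^{(2)}_s)dk_s]\leq \epsilon E[\sup_s|G^{(1)}_s-G^{(2)}_s|^2]+\frac{1}{\epsilon}E[|k_T-k_t|^2]$ and then estimates $E[|k_T-k_t|^2]$ \emph{from the difference equation itself}, obtaining a bound by $4E[|\xi_1-\xi_2|^2]+4E[|v_t|^2]+16c^2T\,E[\int_t^T|v_s|^2ds]+4(1+4c^2T)E[\int_t^T\|z_s\|^2_{X_s}ds]+8E[(\int_t^T|\varphi^{(1)}_s-\varphi^{(2)}_s|ds)^2]$; the choice $\epsilon=8(1+4c^2T)$ makes the $|v_t|^2$ and $\int\|z\|^2_{X_s}$ contributions absorbable into the left-hand side, and Gronwall's inequality finishes. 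Every surviving term on the right is then a difference quantity, which is precisely what produces the claimed form of the estimate. To repair your version you must replace $K^{(1)}_T+K^{(2)}_T$ by a quantity controlled by the differences; the paper does this by bounding the signed integral against $|k_T-k_t|$ rather than against the total variation of $k$ (a step worth examining in its own right, but it is what makes the bound linear in $E[\sup_s|G^{(1)}_s-G^{(2)}_s|^2]$ attainable). The remaining parts of your plan --- the $k_T-k_t$ estimate and the $v_t+k_t$ estimate via Doob's inequality and Lemma \ref{Z2} --- match the paper and go through once the first estimate is secured.
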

\begin{proof} Applying the product rule to $|v_t|^2$, we have:
\begin{align}\label{v1v22}
\nonumber
&|v_t|^2\\
\nonumber
&=|\xi_1-\xi_2|^2+2 \int_t^T v_{s} ( f(s,V_s^{(1)}, Z_s^{(1)})-f(s,V_s^{(2)}, Z_s^{(2)})+(\varphi_s^{(1)}-\varphi_s^{(2)})) ds \\
& \quad +2 \int_t^T v_sdk_s -2 \int_t^T  v_{s-} z_s' dM_s - \int_t^T z_s' dL_sz_s -\int_t^T \|z_s \|_{X_s}^2 ds.
\end{align}
Since $\int_t^T (V_s^{(1)} - G^{(1)}_s) dK_s^{(1)}=\int_t^T (V_s^{(2)} - G^{(2)}_s) dK_s^{(2)}=0$, we have
\begin{align}\label{k1k2}
\nonumber
  &\int_t^T v_{s} dk_s\\
\nonumber&=\int_t^T (V_s^{(1)} - V^{(2)}_s) dK^{(1)}_s- \int_t^T (V_s^{(1)} - V^{(2)}_s) dK_s^{(2)}\\
\nonumber&=\int_t^T (V_s^{(1)} - G^{(1)}_s) dK^{(1)}_s- \int_t^T (V_s^{(2)} - G^{(2)}_s) dK_s^{(1)}\\
\nonumber
&\quad - \int_t^T (V_s^{(1)} - G^{(1)}_s) dK^{(2)}_s + \int_t^T (V_s^{(2)} - G^{(2)}_s) dK_s^{(2)}\\
\nonumber
& \quad +\int_t^T  G^{(1)}_sdK^{(1)}_s- \int_t^T  G^{(2)}_sdK^{(1)}_s - \int_t^T  G^{(1)}_sdK^{(2)}_s+ \int_t^T  G^{(2)}_sdK^{(2)}_s\\
\nonumber
& =- \int_t^T (V_s^{(2)} - G^{(2)}_s) dK_s^{(1)}-\int_t^T (V_s^{(1)} - G^{(1)}_s) dK^{(2)}_s + \int_t^T  (G^{(1)}_s-G^{(2)}_s)dk_s \\
&\leq \int_t^T  (G^{(1)}_s-G^{(2)}_s)dk_s .
\end{align}
Write $c=\max\{c',c''\}$. By (\ref{v1v22}) and (\ref{k1k2}), we obtain
$$
\begin{array}{ll}
&E[|v_t|^2+ \int_t^T \|z_s \|_{X_s}^2 ds]\\[3mm]
& \leq E[|\xi_1-\xi_2|^2]+2 E[\int_t^T  (G^{(1)}_s-G^{(2)}_s)dk_s]\\[3mm]
& \quad+2 E[\int_t^T |v_{s} |\cdot|f(s,V_s^{(1)}, Z_s^{(1)})-f(s,V_s^{(2)}, Z_s^{(2)})+(\varphi_s^{(1)}-\varphi_s^{(2)})| ds]\\[3mm]
&  \leq E[|\xi_1-\xi_2|^2]+2E[c \int_t^T |v_{s}| (|v_{s}|+\|z_s\|_{X_s})ds+\int^T_t |v_{s}|\cdot|\varphi_s^{(1)}-\varphi_s^{(2)}| ds]\\[3mm]
& \quad + 2E[\int_t^T  (G^{(1)}_s-G^{(2)}_s)dk_s]\\[3mm]
&  \leq  E[|\xi_1-\xi_2|^2]+E[\int_t^T  ((2c+3c^2+1)|v_{s}|^2+\dfrac{1}{3}\|z_s\|_{X_s}^2+|\varphi_s^{(1)}-\varphi_s^{(2)}|^2) ds]\\[3mm]
& \quad+ 2E[|k_T-k_t|\cdot \sup\limits_{s\in[t,T]}|G^{(1)}_s-G^{(2)}_s|]
\end{array}
$$
\begin{equation}\label{esvtzt}
\begin{array}{ll}
&\leq  E[|\xi_1-\xi_2|^2]+E[\int_t^T  ((2c+3c^2+1)|v_{s}|^2+\dfrac{1}{3}\|z_s\|_{X_s}^2) ds]\\[3mm]
& \quad+
E[\int_t^T  |\varphi_s^{(1)}-\varphi_s^{(2)}|^2 ds]+  \epsilon E[ \sup\limits_{s\in[t,T]}|G^{(1)}_s-G^{(2)}_s|^2]+\dfrac{1}{\epsilon} E[|k_T-k_t|^2],
\end{array}\end{equation}
where $\epsilon>0$ is an arbitrary positive constant. Since for any $t\in[0,T],$
\begin{align*}v_t&=\xi_1-\xi_2+\int^T_t(( f(s,V_s^{(1)}, Z_s^{(1)})-f(s,V_s^{(2)}, Z_s^{(2)}))+(\varphi_s^{(1)}-\varphi_s^{(2)})) ds\\
& \quad+k_T-k_t-\int^T_tz_s'dM_s,~~t\in[0,T],\end{align*}
 by Doob's inequality and Lemma \ref{Z2} we deduce
\begin{align}\label{eskn}
\nonumber
&E[|k_T-k_t|^2]\\[2mm]
\nonumber
&\leq4E[ |\xi_1-\xi_2|^2+|v_t|^2+|\int^T_tz_s'dM_s|^2]\\
\nonumber
&\quad+4E[|\int^T_t(( f(s,V_s^{(1)}, Z_s^{(1)})-f(s,V_s^{(2)}, Z_s^{(2)}))+(\varphi_s^{(1)}-\varphi_s^{(2)})) ds|^2]\\
\nonumber
& \leq4E[ |\xi_1-\xi_2|^2+|v_t|^2+\int^T_t\|z_s\|_{X_s}^2 ds]\\
\nonumber
&\quad+8E[(\int^T_t| f(s,V_s^{(1)}, Z_s^{(1)})-f(s,V_s^{(2)}, Z_s^{(2)})|ds)^2+(\int^T_t|\varphi_s^{(1)}-\varphi_s^{(2)}| ds)^2]\\
\nonumber
&\leq4E[ |\xi_1-\xi_2|^2+|v_t|^2+\int^T_t\|z_s\|_{X_s}^2 ds]\\
\nonumber
&\quad+8 c^2 E[(\int^T_t(|v_s|+\|z_s\|_{X_s}) ds)^2]+8E[(\int^T_t|\varphi_s^{(1)}-\varphi_s^{(2)}| ds)^2]\\
\nonumber
&\leq4E[ |\xi_1-\xi_2|^2+|v_t|^2+\int^T_t\|z_s\|_{X_s}^2 ds]\\
\nonumber
&\quad+16 c^2 TE[\int^T_t(|v_s|^2+\|z_s\|_{X_s}^2) ds]+8E[(\int^T_t|\varphi_s^{(1)}-\varphi_s^{(2)}| ds)^2]\\
\nonumber
&\leq4E[ |\xi_1-\xi_2|^2]+4E[|v_t|^2]+8E[(\int^T_t|\varphi_s^{(1)}-\varphi_s^{(2)}| ds)^2]\\
&\quad+16 c^2 TE[\int^T_t|v_s|^2 ds]+4(1+4 c^2T)E[\int^T_t\|z_s\|_{X_s}^2ds].
\end{align}
Set $\epsilon=8(1+4 c^2T)$ in inequality (\ref{esvtzt}). With the help of inequality (\ref{eskn}), from inequality (\ref{esvtzt}) we derive for any $t\in[0,T],$
\begin{align}\label{esvtzt1}
\nonumber
&\dfrac{1}{2}E[|v_t|^2+\dfrac{1}{6} \int_t^T \|z_s \|_{X_s}^2 ds]\\
\nonumber
&\leq \dfrac{3}{2} E[|\xi_1-\xi_2|^2]+E[\int_t^T  (2c+3c^2+2)|v_{s}|^2 ds]\\
&\quad+
2E[(\int_t^T  |\varphi_s^{(1)}-\varphi_s^{(2)}| ds)^2]+  8(1+4 c^2T)E[ \sup\limits_{s\in[t,T]}|G^{(1)}_s-G^{(2)}_s|^2].
\end{align}
Using Gronwall's inequality, by (\ref{esvtzt1}) we have for any $t\in[0,T],$
$$\begin{array}{ll}
E[|v_t|^2]\\[3mm]
 \leq (3E[|\xi_1-\xi_2|^2+
4(\int_0^T  |\varphi_s^{(1)}-\varphi_s^{(2)}| ds)^2+ 16(1+4 c^2T) \sup\limits_{s\in[0,T]}|G^{(1)}_s-G^{(2)}_s|^2])\\[3mm]
~~~\cdot e^{2(2c+3c^2+2)(T-t)}.
\end{array}$$
So there exists a constant $C_3>0$ depending on the constants $c$ and $T$ such that
\begin{equation}\label{esvtzt2}
\begin{array}{ll}
\sup\limits_{t\in[0,T]}E[|v_t|^2]\\[3mm]
 \leq C_3E[|\xi_1-\xi_2|^2+
(\int_0^T  |\varphi_t^{(1)}-\varphi_t^{(2)}| dt)^2+  \sup\limits_{t\in[0,T]}|G^{(1)}_t-G^{(2)}_t|^2].
\end{array}
\end{equation}
Noting $E[\int^T_t|v_s|^2 ds]\leq(T-t)\sup\limits_{t\in[0,T]}E[|v_t|^2]$ for $t\in[0,T]$, by (\ref{eskn}), (\ref{esvtzt1}) and (\ref{esvtzt2}) we know
there exists a constant $C_4>0$ depending on the constants $c$ and $T$ such that
\begin{equation}\label{essupvt}
\begin{array}{ll}
\sup\limits_{t\in[0,T]}E[|v_t|^2]+E[\int_0^T \|z_t \|_{X_t}^2 dt]+\sup\limits_{t\in[0,T]}E[|k_T-k_t|^2]\\[4mm]
 \leq C_4E[|\xi_1-\xi_2|^2+
(\int_0^T  |\varphi_t^{(1)}-\varphi_t^{(2)}| dt)^2 +  \sup\limits_{t\in[0,T]}|G^{(1)}_t-G^{(2)}_t|^2].
\end{array}
\end{equation}
Because
\begin{align*}v_t+k_t&=\xi_1-\xi_2+\int^T_t(( f(s,V_s^{(1)}, Z_s^{(1)})-f(s,V_s^{(2)}, Z_s^{(2)}))+(\varphi_s^{(1)}-\varphi_s^{(2)})) ds\\
& \quad+k_T-\int^T_tz_s'dM_s,~~t\in[0,T],\end{align*}
we deduce
\begin{align*}&E[\sup\limits_{t\in[0,T]}|v_t+k_t|^2]\\
&\leq 4E[|\xi_1-\xi_2|^2]+4E[|k_T|^2]+4E[\sup\limits_{t\in[0,T]}|\int^T_tz_s'dM_s|^2]\\
& \quad+4E[(\int^T_0|( f(s,V_s^{(1)}, Z_s^{(1)})-f(s,V_s^{(2)}, Z_s^{(2)}))+(\varphi_s^{(1)}-\varphi_s^{(2)})| ds)^2].\end{align*}
By (\ref{eskn}), set $t=0$, we derive
$$
\begin{array}{ll}E[|k_T|^2]
 \leq&4E[ |\xi_1-\xi_2|^2]+4\sup\limits_{s\in[0,T]}E[|v_s|^2]+8E[(\int^T_0|\varphi_s^{(1)}-\varphi_s^{(2)}| ds)^2]\\[4mm]
&+16 c^2 TE[\int^T_0|v_s|^2 ds]+4(1+4 c^2T)E[\int^T_0\|z_s\|_{X_s}^2ds],
\end{array}
$$
Then by (\ref{essupvt}), we obtain there exists a constant $C_5>0$ depending on the constants $c$ and $T$ such that
$$
\begin{array}{ll}
E[\sup\limits_{t\in[0,T]}|v_t+k_t|^2]+\int_0^T \|z_t\|_{X_t}^2 dt]\\[4mm]
 \leq C_5E[|\xi_1-\xi_2|^2+
(\int_0^T  |\varphi_t^{(1)}-\varphi_t^{(2)}| dt)^2+  \sup\limits_{t\in[0,T]}|G^{(1)}_t-G^{(2)}_t|^2].
\end{array}
$$
\end{proof}
\indent Similarly we obtain the following result for the solutions to BSDEs driven by the Markov Chain.
\begin{proposition}\label{cdpbsde}
Suppose $f$ satisfies \eqref{Lipch},\eqref{Con_f}, $\xi_i\in L^2(\mathcal{F}_T)$, and $\{\varphi^{(i)}_t;~t\in[0,T]\}$ is a predictable process satisfying (\ref{Con_v}).  Let $(Y^{(i)}, Z^{(i)})$ be the solution of the BSDE for the Markov Chain as following
$$ Y^{(i)}_t = \xi_i + \int_t^T( f(s, Y^{(i)}_s, Z^{(i)}_s ) +\varphi_s^{(i)})ds -\int_t^T  (Z^{(i)}_s)' dM_s,~~t \in[0,T], $$
where $i=1,2$. Set
$(y,z)=(Y^{(1)}-Y^{(2)},Z^{(1)}-Z^{(2)}).$ Then there exists a constant $\bar{C}'>0$ depending on the Lipschitz constant $c$ of $f$ and $T$ such that
$$
\begin{array}{ll}
E[\sup\limits_{t\in[0,T]}|y_t|^2]+\int_0^T \|z_t\|^2_{X_t}dt]
 \leq \bar{C}'E[|\xi_1-\xi_2|^2+
(\int_0^T  |\varphi_t^{(1)}-\varphi_t^{(2)}| dt)^2].
\end{array}
$$ \end{proposition}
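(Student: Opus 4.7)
The strategy is to specialize the argument of Proposition \ref{cdp} to the much simpler setting where there is no reflection process and no obstacle, so every term involving $K^{(i)}$ or $G^{(i)}$ simply drops out. Set $y=Y^{(1)}-Y^{(2)}$, $z=Z^{(1)}-Z^{(2)}$, and apply the product rule to $|y_t|^2$ as in \eqref{v1v22}. After taking expectations the martingale terms $\int_t^T y_{s-}z_s'dM_s$ and $\int_t^T z_s'dL_sz_s$ vanish, and the jump contribution produces the term $\int_t^T\|z_s\|^2_{X_s}\,ds$ on the left-hand side. This yields, for every $t\in[0,T]$,
\begin{align*}
E[|y_t|^2] + E\!\left[\int_t^T\|z_s\|^2_{X_s}\,ds\right]
&= E[|\xi_1-\xi_2|^2] \\
&\quad + 2E\!\left[\int_t^T y_s\bigl(f(s,Y^{(1)}_s,Z^{(1)}_s)-f(s,Y^{(2)}_s,Z^{(2)}_s)+\varphi^{(1)}_s-\varphi^{(2)}_s\bigr)ds\right].
\end{align*}

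Next, use the Lipschitz condition \eqref{Lipch} with $c=\max\{c',c''\}$, the elementary bound $2|y_s|\cdot c(|y_s|+\|z_s\|_{X_s})\le(2c+3c^2)|y_s|^2+\tfrac{1}{3}\|z_s\|^2_{X_s}$, and $2|y_s|\cdot|\varphi^{(1)}_s-\varphi^{(2)}_s|\le |y_s|^2+|\varphi^{(1)}_s-\varphi^{(2)}_s|^2$, to absorb $\tfrac{1}{3}\|z_s\|^2_{X_s}$ on the left. This produces the analogue of \eqref{esvtzt1} without the $G$ and $k$ terms, and Gronwall's inequality then delivers a bound of the form
\begin{equation*}
\sup_{t\in[0,T]} E[|y_t|^2] \le C\,E\!\left[|\xi_1-\xi_2|^2+\Bigl(\int_0^T|\varphi^{(1)}_t-\varphi^{(2)}_t|\,dt\Bigr)^2\right].
\end{equation*}
Feeding this back into the inequality above (with $t=0$) yields the matching bound for $E[\int_0^T\|z_s\|^2_{X_s}\,ds]$.

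Finally, to pass from $\sup_t E[|y_t|^2]$ to $E[\sup_t|y_t|^2]$, I would isolate $y_t$ from the BSDE as
\begin{equation*}
y_t=\xi_1-\xi_2+\int_t^T\bigl(f(s,Y^{(1)}_s,Z^{(1)}_s)-f(s,Y^{(2)}_s,Z^{(2)}_s)+\varphi^{(1)}_s-\varphi^{(2)}_s\bigr)ds-\int_t^T z_s'\,dM_s,
\end{equation*}
square both sides, take $\sup_{t\in[0,T]}$, and use Doob's maximal inequality together with Lemma \ref{Z2} exactly as in the display preceding \eqref{supV} in the proof of Proposition \ref{es}. The Cauchy--Schwarz bound $(\int_0^T c(|y_s|+\|z_s\|_{X_s})\,ds)^2\le 2c^2T\int_0^T(|y_s|^2+\|z_s\|^2_{X_s})\,ds$ converts the driver integral into quantities already controlled in the previous step. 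Collecting constants produces the required estimate with a constant $\bar{C}'$ depending only on $c$ and $T$.

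The argument is essentially routine once the RBSDE version is in hand; the only delicate point is arranging Young's inequality with the right weights so that both $\tfrac{1}{3}\|z_s\|^2_{X_s}$ from the Lipschitz splitting and, later, the contribution from $\int_t^T z_s'\,dM_s$ (bounded by Doob and Lemma \ref{Z2}) can be absorbed on the left, and so that the right-hand side contains no term involving $K$ or an obstacle. I expect no genuine obstacle beyond this bookkeeping, since none of the steps that caused trouble in Proposition \ref{cdp} (namely, controlling $\int_t^T v_s\,dk_s$ via the Skorokhod condition and estimating $E[|k_T-k_t|^2]$) are needed here.
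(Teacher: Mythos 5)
Your approach is exactly the one the paper intends: Proposition \ref{cdpbsde} is stated with no proof beyond the remark that it follows \enquote{similarly} to Proposition \ref{cdp}, and your plan --- drop every term involving $K^{(i)}$ and $G^{(i)}$, run the product-rule/Gronwall argument on $|y_t|^2$, then upgrade $\sup_t E[|y_t|^2]$ to $E[\sup_t|y_t|^2]$ via Doob's inequality and Lemma \ref{Z2} --- is precisely that specialization.

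One step needs adjustment, however. The bound $2|y_s|\,|\varphi^{(1)}_s-\varphi^{(2)}_s|\le|y_s|^2+|\varphi^{(1)}_s-\varphi^{(2)}_s|^2$ leaves $E[\int_0^T|\varphi^{(1)}_s-\varphi^{(2)}_s|^2\,ds]$ on the right-hand side, and this quantity is \emph{not} dominated by a constant times $E[(\int_0^T|\varphi^{(1)}_s-\varphi^{(2)}_s|\,ds)^2]$ (Cauchy--Schwarz gives only the reverse comparison; a $\varphi$-difference concentrated on a short interval makes the $L^2$-in-time norm much larger than the squared $L^1$-in-time norm). So as written you prove a weaker estimate than the one stated. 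To obtain the stated right-hand side, estimate instead
$$2E\Bigl[\int_t^T|y_s|\,|\varphi^{(1)}_s-\varphi^{(2)}_s|\,ds\Bigr]\le\gamma E\Bigl[\sup_{s\in[t,T]}|y_s|^2\Bigr]+\frac{1}{\gamma}E\Bigl[\Bigl(\int_t^T|\varphi^{(1)}_s-\varphi^{(2)}_s|\,ds\Bigr)^2\Bigr],$$
with $\gamma>0$ chosen small at the end so that the supremum term is absorbed after the Doob step --- exactly the device used for the $f(s,0,0)$ term in the proof of Proposition \ref{es}. (The paper's own proof of Proposition \ref{cdp} glosses over the same point in passing from \eqref{esvtzt} to \eqref{esvtzt1}.) With that replacement the rest of your argument goes through and the constant still depends only on $c$ and $T$.
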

\section{A Comparison theorem for one-dimensional RBSDEs driven by the Markov chain}
\indent Suppose $(Y,Z,K)$ and $(V, U, J)$ are the solutions of the following two RBSDEs for the Markov Chain, respectively,
$$
\left \{
\begin{array}{ll}
Y_t = \xi_1+ \int_t^T f(s, Y_s, Z_s ) ds + K_T -K_t -\int_t^T  Z'_sdM_s,~~  0 \leq t \leq T;\\[2mm]
Y_t \geq G_t ,~~~  0 \leq t \leq T; \\[2mm]
 \{K_t, t \in [0,T]\}\mbox{ is continuous and increasing, moreover, }K_0=0\mbox{ and} \\[1mm]
\int_0^T (Y_s - G_s)  dK_s = 0,\end{array} \right.
$$
and
$$
\left \{
\begin{array}{ll}
V_t = \xi_2+ \int_t^T g(s, V_s, U_s ) ds + J_T -J_t -\int_t^T  U'_sdM_s,~~  0 \leq t \leq T;\\[2mm]
V_t \geq G_t ,~~~  0 \leq t \leq T;\\[2mm]
 \{J_t, t \in [0,T]\}\mbox{ is continuous and increasing, moreover, }J_0=0\mbox{ and} \\[1mm]
\int_0^T (V_s - G_s) dJ_s = 0,\end{array} \right. $$
\begin{theorem} \label{CT}
Assume $\xi_1,\xi_2\in L^2(\mathcal{F}_T)$, $G$ satisfies \eqref{Con_g}, and  $f,g$ satisfy \eqref{Lipch}, \eqref{c''} and \eqref{Con_f}. If $\xi_1 \leq \xi_2 $, a.s., and $f(t,v, z) \leq g(t,v, z)$, a.e., a.s., for any $t\in[0,T],(v,z)\in \mathbb{R}\times \mathbb{R}^N$;
 then
$$P( Y_t\leq V_t,\text{ for any } t \in [0,T])=1$$
and
$$P( K_t\geq J_t,\text{ for any } t \in [0,T])=1.$$
\end{theorem}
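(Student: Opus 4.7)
The natural approach is the penalization method used to construct $(Y,Z,K)$ and $(V,U,J)$ in \cite{RYE}. For each $n\ge 1$, let $(Y^n,Z^n)$ and $(V^n,U^n)$ denote the solutions of the one-dimensional BSDEs
\begin{align*}
Y^n_t &= \xi_1 + \int_t^T \bigl[f(s, Y^n_s, Z^n_s) + n(Y^n_s - G_s)^-\bigr]\,ds - \int_t^T (Z^n_s)' dM_s,\\
V^n_t &= \xi_2 + \int_t^T \bigl[g(s, V^n_s, U^n_s) + n(V^n_s - G_s)^-\bigr]\,ds - \int_t^T (U^n_s)' dM_s,
\end{align*}
and set $K^n_t := n\int_0^t (Y^n_s-G_s)^-\,ds$, $J^n_t := n\int_0^t (V^n_s-G_s)^-\,ds$. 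From \cite{RYE}, $(Y^n,Z^n,K^n)\to(Y,Z,K)$ and $(V^n,U^n,J^n)\to(V,U,J)$ in the norms relevant for what follows, with the value processes converging monotonically upward.

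To invoke the BSDE comparison Lemma \ref{CTBSDE} at level $n$, write $f^n(s,y,z):=f(s,y,z)+n(y-G_s)^-$ and $g^n(s,y,z):=g(s,y,z)+n(y-G_s)^-$. Both $f^n$ and $g^n$ are Lipschitz in $z$ with the same constant $c''$ as $f$ and $g$ (the penalty is $z$-free), so Assumption \ref{ass0} remains in force; the integrability \eqref{Con_f} also transfers via $(y-G_s)^-\le|y|+G_s^+$. Since $f\le g$ pointwise and the two penalty terms coincide when evaluated at the same argument,
\[ f^n(s,V^n_s,U^n_s)=f(s,V^n_s,U^n_s)+n(V^n_s-G_s)^-\le g(s,V^n_s,U^n_s)+n(V^n_s-G_s)^-=g^n(s,V^n_s,U^n_s). \]
Combined with $\xi_1\le\xi_2$, Lemma \ref{CTBSDE} yields $Y^n_t\le V^n_t$ for all $t\in[0,T]$, a.s., for every $n$. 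The comparison for the increasing processes is then automatic: the map $x\mapsto(x-G_s)^-$ is non-increasing, so $Y^n_s\le V^n_s$ forces $(Y^n_s-G_s)^-\ge(V^n_s-G_s)^-$, whence $K^n_t\ge J^n_t$ for every $t\in[0,T]$.

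Letting $n\to\infty$ yields $Y_t\le V_t$ and $K_t\ge J_t$ a.s.\ for each fixed $t$. Using the RCLL property of $Y,V$ and the continuity of $K,J$, one restricts to a countable dense subset of $[0,T]$ and upgrades to the simultaneous statements $P(Y_t\le V_t\text{ for all }t\in[0,T])=1$ and $P(K_t\ge J_t\text{ for all }t\in[0,T])=1$. The main obstacle is controlling the limit passage so that the pointwise inequalities survive: the monotone convergence $Y^n\uparrow Y$, $V^n\uparrow V$ built into the penalization scheme of \cite{RYE} makes this clean for the $Y$--$V$ comparison, while the convergences $K^n\to K$ and $J^n\to J$ (typically in $L^2$, hence a.s.\ along a subsequence) are enough for the $K$--$J$ comparison once continuity in $t$ is exploited.
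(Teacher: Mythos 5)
Your proposal is correct and follows essentially the same route as the paper: penalize both RBSDEs, apply the BSDE comparison result (Lemma \ref{CTBSDE}) at each level $n$ using $f_n\le g_n$ and the fact that the $z$-free penalty preserves Assumption \ref{ass0}, deduce $K^n_t\ge J^n_t$ from the monotonicity of $x\mapsto(x-G_t)^-$, and pass to the limit via $Y=\sup_n Y^n$, $V=\sup_n V^n$ and the a.s.\ uniform convergence of $K^n,J^n$ along subsequences. The only cosmetic difference is that the paper obtains the simultaneous (for all $t$) statements directly from intersecting the full-measure sets and from the uniform-in-$t$ convergence of $K^n,J^n$, rather than via a countable dense subset.
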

\begin{proof}
For each $n \in \mathbb{N}$, $(t,v,z)\in [0,T]\times \mathbb{R}\times \mathbb{R}^N$, define:
$$f_n (t,v,z)= f(t,v,z)+n(v-G_t)^{-}.$$
 It is clear that $f_n$ is Lipschitz continuous. For each $n \in \mathbb{N}$, consider BSDE
\[Y_t^{n} = \xi_1 + \int_t^T f_n(s, Y_s^{n}, Z_s^{n}) ds - \int_t^T  (Z_{s}^{n})' dM_s,~~~0\leq t\leq T.\]
By Lemma \ref{existence}, for each $n \in \mathbb{N}$, the above equation has a unique solution $(Y^n, Z^n)\in L^2_{\mathcal{F}}(0,T;\mathbb{R})\times P^2_{\mathcal{F}}(0,T;\mathbb{R}^N)$. For each $n \in \mathbb{N}$,
define:
$$K_t^{n}= n \int_0^t (Y^{n}_s -G_s)^- ds.$$
On the other hand, for each $n \in \mathbb{N}$, $(t,v,z)\in [0,T]\times \mathbb{R}\times \mathbb{R}^N$, define:
$$g_n (t,v,z)= g(t,v,z)+n(v-G_t)^{-}.$$
 It is clear that $g_n$ is Lipschitz continuous. For each $n \in \mathbb{N}$, consider BSDE
\[V_t^{n} = \xi_2 + \int_t^T g_n(s, V_s^{n}, U_s^{n}) ds - \int_t^T  (U_{s}^{n})' dM_s,~~~0\leq t\leq T.\]
By Lemma \ref{existence}, for each $n \in \mathbb{N}$, the above equation has a unique solution $(V^n, U^n)\in L^2_{\mathcal{F}}(0,T;\mathbb{R})\times P^2_{\mathcal{F}}(0,T;\mathbb{R}^N)$. For each $n \in \mathbb{N}$,
define:
$$J_t^{n}= n \int_0^t (V^{n}_s -G_s)^- ds.$$
For each $n \in \mathbb{N}$, $f_n$ satisfies Assumption \ref{ass0} and $f_n \leq g_n$. Therefore from Lemma \ref{CTBSDE}, for each $n \in \mathbb{N}$,
$$ P(Y^{n}_t \leq V^{n}_t, ~ \text{for any } t \in [0,T])=1.$$
 That is, for any $n \in \mathbb{N}$, there exists a subset $B_n \subseteq \Omega$ such that $P(B_n) =1$ and for any $\omega \in B_n$, $Y_t^n(\omega) \leq V_t^{n}(\omega)$, for any $t \in [0,T]$. Let $\tilde{B} = \bigcap\limits_{n=1}^{\infty} B_n$, hence $P(\tilde{B}) =1$ and for each $\omega \in \tilde{B}$, $Y_t^n(\omega) \leq V_t^{n}(\omega)$, for any $t \in [0,T]$, $n \in \mathbb{N}$. Hence
 $$P( \sup_{n \in \mathbb{N}}Y_t^n \leq  \sup_{n \in \mathbb{N}}V_t^{n},~\text{ for any } t \in [0,T])=1.$$
From the proof of existence in  Ramarimbahoaka, Yang and Elliott \cite{RYE} we have $Y_t = \sup\limits_{n \in \mathbb{N}} Y_t^n$ and $V_t= \sup\limits_{n \in \mathbb{N}} V_t^n,$ $t \in [0,T]$. Thus,
 $$P( Y_t \leq V_t,~\text{ for any } t \in [0,T])=1.$$
Also from the proof of existence in  Ramarimbahoaka, Yang and Elliott \cite{RYE} we have when $n \rightarrow \infty$,
\begin{equation*}
E[\sup_{0\leq t\leq T}|K_t^n -K_t|^2 ] \rightarrow 0 ~~~\text{ and }~~~E[\sup_{0\leq t\leq T}|J_t^n -J_t|^2 ] \rightarrow 0.
\end{equation*}
Then for any $\epsilon>0,$
\[\lim_{n\rightarrow\infty} P(\sup_{0\leq t\leq T} |K^{n}_t-K_t| > \epsilon)=0~~\text{ and }~~\lim_{n\rightarrow\infty} P(\sup_{0\leq t\leq T} |J^{n}_t-J_t| > \epsilon)=0 .\]
So there exists a subsequence $\{n_k;~ k \in \mathbb{N}\} \subset \{n;~ n \in \mathbb{N}\}$ and a subsequence $\{n_{k_m};~m \in \mathbb{N}\} \subset \{n_k;~ k \in \mathbb{N}\}$
such that
$$\lim_{k\rightarrow\infty}  \sup_{0\leq t \leq T} |K_t^{n_k}-K_t|=0, ~ \text{ a.e.}$$
and
$$\lim_{m\rightarrow\infty}  \sup_{0\leq t \leq T} |J_t^{n_{k_m}}-J_t|=0, ~ \text{ a.e.}$$
Therefore,
\begin{align*}&P(\lim_{m\rightarrow\infty}  K_t^{n_{k_m}}=K_t,~\text{ for any } t \in [0,T])=1;\\[2mm]
\text{ and }&~P(\lim_{m\rightarrow\infty}  J_t^{n_{k_m}}=J_t,~\text{ for any } t \in [0,T])=1.\end{align*}
That is, there is a subset $A\subseteq \Omega$ such that $P(A)=1$ and for any $\omega\in A$, $\lim\limits_{m\rightarrow\infty}  K_t^{n_{k_m}}(\omega)=K_t(\omega)$ and
$\lim\limits_{m\rightarrow\infty}  J_t^{n_{k_m}}(\omega)=J_t(\omega)$, for any $t\in[0,T]$.\\
Now we prove for any $\omega\in\tilde{B}$, $K_t^{n_{k_m}}(\omega)\geq J_t^{n_{k_m}}(\omega)$, for any $t\in[0,T],m \in \mathbb{N}$.
Noticing on $\tilde{B}$, $Y_t^{n_{k_m}}\leq V_t^{n_{k_m}}$, for any $t\in[0,T],m \in \mathbb{N}$, there are three cases:
\begin{enumerate}
\item If $(t,\omega)\in[0,T]\times\tilde{B}$ such that $G_t(\omega)\leq Y^{n_{k_m}}_t(\omega)\leq V^{n_{k_m}}_t(\omega)$, for any $m \in \mathbb{N}$, then
$$ (Y^{n_{k_m}}_t -G_t)^- (\omega)=(V^{n_{k_m}}_t -G_t)^- (\omega)=0,~ \text{ for any }m \in \mathbb{N}.$$
By the definition of $K^{n_{k_m}}$ and $J^{n_{k_m}}$ we deduce $K^{n_{k_m}}_t(\omega)=J^{n_{k_m}}_t(\omega)$, for any $m \in \mathbb{N}$.
\item If $(t,\omega)\in[0,T]\times\tilde{B}$ such that $Y^{n_{k_m}}_t(\omega)\leq G_t(\omega)\leq V^{n_{k_m}}_t(\omega)$, for any $m \in \mathbb{N}$, then
$$ (Y^{n_{k_m}}_t -G_t)^- (\omega)\geq0=(V^{n_{k_m}}_t -G_t)^- (\omega),~ \text{ for any }m \in \mathbb{N}.$$
Then we derive $K^{n_{k_m}}_t(\omega)\geq J^{n_{k_m}}_t(\omega)$, for any $m \in \mathbb{N}$.
\item If $(t,\omega)\in[0,T]\times\tilde{B}$ such that $Y^{n_{k_m}}_t(\omega)\leq V^{n_{k_m}}_t(\omega)\leq G_t(\omega)$, for any $m \in \mathbb{N}$, then
$$ (Y^{n_{k_m}}_t -G_t)^- (\omega)= G_t(\omega) -Y^{n_{k_m}}_t(\omega)\geq G_t(\omega) -V^{n_{k_m}}_t(\omega)=(V^{n_{k_m}}_t -G_t)^- (\omega),$$
for any $m \in \mathbb{N}$. Then we obtain $K^{n_{k_m}}_t(\omega)\geq J^{n_{k_m}}_t(\omega)$, for any $m \in \mathbb{N}$.
\end{enumerate}
Hence we know for any $\omega\in\tilde{B}$, $K_t^{n_{k_m}}(\omega)\geq J_t^{n_{k_m}}(\omega)$, for any $t\in[0,T],m \in \mathbb{N}$. Thus, for any $\omega\in\tilde{B}$,
$$\lim _{m\rightarrow\infty}K_t^{n_{k_m}}(\omega)\geq \lim _{m\rightarrow\infty}J_t^{n_{k_m}}(\omega),~~\text{ for any }t\in[0,T].$$
Therefore, for any $\omega\in A\cap\tilde{B}$,
$$K_t(\omega)=\lim _{m\rightarrow\infty}K_t^{n_{k_m}}(\omega)\geq \lim _{m\rightarrow\infty}J_t^{n_{k_m}}(\omega)=J_t(\omega),\text{ for any } t \in [0,T].$$
Because $P(A\cap\tilde{B})=1$, we conclude
$$P( K_t\geq J_t,\text{ for any } t \in [0,T])=1.$$
\end{proof}
\section{Conclusion}
\indent In this paper, we have
provided an estimate for the solutions of RBSDEs, derived a continuous dependence property for their solutions with
respect to the parameters of the equations, and established a comparison result
for the solutions of RBSDEs driven by a Markov chain.

\end{document}